\newcommand{\TheTitle}{Convergence rates for upwind schemes with rough coefficients}
\newcommand{\TheAuthors}{Andr\'e Schlichting and Christian Seis}
\headers{\TheTitle}{\TheAuthors}
\title{{\TheTitle}\thanks{Submitted to the editors \today.}}
\author{
  Andr\'e Schlichting\thanks{Institut f\"ur Angewandte Mathematik, Universit\"at Bonn (\email{schlichting@iam.uni-bonn.de}).}
  \and
  Christian Seis\thanks{Institut f\"ur Angewandte Mathematik, Universit\"at Bonn (\email{seis@iam.uni-bonn.de}).}
}
\def\Xint#1{\mathchoice%
{\XXint\displaystyle\textstyle{#1}}%
{\XXint\textstyle\scriptstyle{#1}}%
{\XXint\scriptstyle\scriptscriptstyle{#1}}%
{\XXint\scriptscriptstyle\scriptscriptstyle{#1}}%
\!\int}
\def\XXint#1#2#3{{\setbox0=\hbox{$#1{#2#3}{\int}$ }
\vcenter{\hbox{$#2#3$ }}\kern-.59\wd0}}
\def\avint{\Xint-}
\newcommand{\edge}{\mathsmaller{\mid}}
\newcommand{\grad}{\nabla}
\renewcommand{\div}{\grad\cdot}
\newcommand{\laplace}{\Delta}
\newcommand{\noto}{\,\,\not\!\!\longrightarrow}
\DeclarePairedDelimiter{\floor}{\lfloor}{\rfloor}
\newcommand{\N}{\mathbf{N}}
\newcommand{\F}{{\mathcal F}}
\newcommand{\R}{\mathbf{R}}
\newcommand{\Z}{\mathbf{Z}}
\DeclareMathOperator{\diam}{diam}
\def\opt{{\mathrm{opt}}}
\DeclareMathOperator{\spt}{supp}
\newcommand{\Ha}{\ensuremath{\mathcal{H}}}
\newcommand{\D}{\ensuremath{\mathcal{D}}}
\newcommand{\io}{\int_{\Omega}}
\def\per{{\mathrm{per}}}
\newcommand{\llb}{\llbracket}
\newcommand{\rrb}{\rrbracket}
\newcommand{\eps}{\varepsilon}
\newcommand{\tacka}{\, \cdot\,}
\DeclarePairedDelimiter{\abs}{\lvert}{\rvert}
\DeclarePairedDelimiter{\norm}{\lVert}{\rVert}
\DeclarePairedDelimiter{\bra}{(}{)}
\DeclarePairedDelimiter{\pra}{[}{]}
\DeclarePairedDelimiter{\set}{\{}{\}}
\DeclarePairedDelimiter{\skp}{\langle}{\rangle}
\newcommand{\cA}{\mathcal{A}}
\newcommand{\cB}{\mathcal{B}}
\newcommand{\cF}{\mathcal{F}}
\newcommand{\cT}{\mathcal{T}}
\newcommand{\EE}{\mathbb{E}}
\newcommand{\FF}{\mathbb{F}}
\newcommand{\PP}{\mathbb{P}}
\renewcommand{\L}{\mathcal{L}}
\begin{document}

\maketitle

\begin{abstract}
This paper is concerned with the numerical analysis of the explicit upwind finite volume scheme for numerically solving continuity equations. We are interested in the case where the advecting velocity field has spatial Sobolev regularity and initial data are merely integrable.
We estimate the error between approximate solutions constructed by the upwind scheme and distributional solutions of the continuous problem in a Kantorovich--Rubinstein distance, which was recently used for stability estimates for the continuity equation~\cite{Seis16a}. Restricted to Cartesian meshes, our estimate shows that the rate of weak convergence is at least of order~$1/2$ in the mesh size. The proof relies on a probabilistic interpretation of the upwind scheme~\cite{DelarueLagoutiere11}.
We complement the weak convergence result with an example that illustrates that for rough initial data no rates can be expected in strong norms. The same example suggests that the weak order $1/2$ rate is optimal.
\end{abstract}

\begin{keywords}
  continuity equation, finite volume scheme, Kantorovich--Rubinstein, order of convergence, stability, upwind
\end{keywords}

\begin{AMS}
  65M08, 65M15, 65M75
\end{AMS}

\section{Introduction}

This paper is concerned with the numerical analysis of the explicit upwind finite volume scheme for solving linear conservative transport equations. We are interested in situations in which the coefficients in the equation are rough, but still within the range in which the associated Cauchy problem is well-posed. To be more specific, we consider nearly incompressible advecting velocity fields with spatial Sobolev regularity and configurations that are integrable but not necessarily bounded. This is the setting studied by DiPerna and Lions in their original paper \cite{DiPernaLions89}.

The goal of this work is an estimate of the error of the numerical scheme. In our main result, we show that the rate of convergence of the approximate solution given by the explicit upwind scheme towards the unique weak solution of the continuous problem is at least of order $1/2$ in the mesh size, uniformly in time. Our bound is valid for uniform \emph{Cartesian meshes}\footnote{All control elements are isometric axis-parallel rectangular boxes} only, but possible extensions to more general meshes are discussed. To measure the numerical error we use nonstandard distances from the theory of optimal mass transportation which appear to be natural in the context of continuity equations~\cite{Seis13b,Seis16a,Seis16b}. As these distances metrize weak convergence, the present paper provides a bound on the order of {\em weak} convergence. We will moreover see that, in general,  strong convergence rates cannot be expected for rough initial data. In this sense, the choice of weak convergence measures is optimal. Our computations moreover suggest that the order $1/2$ rate is sharp.

Considering coefficients under low regularity assumptions appears to be natural in the context of fluid dynamics, for instance, for problems described by compressible or incompressible inhomogeneous Navier--Stokes equations, or engineering questions related to fluid mixing, which attracted much interest recently \cite{Thiffeault12}. The present work can be considered as a first step towards the error analysis of numerical schemes approximating model problems that feature more general (also nonlinear) transport phenomena with rough coefficients.

Part of our analysis is built on a probabilistic interpretation of the upwind scheme similar to the one discussed by Delarue and Lagouti\`ere in \cite{DelarueLagoutiere11} (in the context of Lipschitz vector fields), and the canonical  representation of approximate solutions by the flow map induced by it. This interpretation recently guided the duo jointly with Vauchelet to new error estimates for the upwind scheme modeling transport with (one-sided) Lipschitz vector fields \cite{DelarueLagoutiereVauchelet16}. In a certain sense, the present work combines ideas from these two works with some novel stability estimates for continuity equations recently obtained in~\cite{Seis16a}. (See also \cite{Seis16b} for optimal estimates.)

\emph{Outline of the paper.}  In Section \ref{S:2}, we introduce the continuous model and the upwind finite volume scheme; we present and discuss our main result and illustrate it by numerical simulations.  Properties of the continuous model are collected in Section \ref{S:3}. Section \ref{S:5} contains a brief summary of tools from optimal mass transportation that are relevant in the analysis.
 In Section \ref{S:4}, we derive properties of the numerical scheme.
 Section \ref{S:6} is devoted to the proof of the error estimates. In Section \ref{Ex1} we propose an example which suggests the optimality of our main result. We conclude this paper with a discussion in Section \ref{S:discussion}.

\section{Setting and results}\label{S:2}

Since most our our error analysis is conducted for arbitrary (though regular) mesh geometries, we will in the following present the setting for general meshes. Whenever our argumentation is restricted to Cartesian meshes this will be emphasized. This strategy allows the reader to easily identify the obstacles that have to be overcome in order to generalize our result and is thus advantageous for future research. Again, we caution the reader that our main result is valid for \emph{Cartesian meshes} only.

\subsection{The continuous problem}
Let $\Omega$ be a bounded polyhedral domain in $\R^d$. The conservative transport of a quantity~$\rho$ with initial configuration $\rho_0$ by a vector field $u$ is modeled by the Cauchy problem for the continuity equation
\begin{equation}
\label{1}
\begin{cases}
\partial_t \rho + \div (u\rho) = 0& \mbox{in }[0,T]\times \Omega,\\
\quad\rho(0,\tacka) = \rho_0&\mbox{in }\Omega.
\end{cases}
\end{equation}
We are interested in vector fields with no flux across the boundary of the domain,
\begin{equation}
\label{12}
u\cdot \nu = 0\quad\mbox{on }\partial\Omega,
\end{equation}
where $\nu$ denotes, as usual, the outward normal on $\partial \Omega$. The set of equations is chosen in such a way that the total ``mass'' $\int \rho\, dx$ is (formally) conserved.

It is well known that in the case of smooth vector fields, solutions can be constructed via the method of characteristics. Out of the smooth setting, the analytical treatment of the equation was initiated by DiPerna and Lions \cite{DiPernaLions89}, who developed the theory of so-called  \emph{renormalized solutions}. The authors derive uniqueness and stability properties of renormalized solutions and show that distributional solutions are renormalized if the advecting velocity field satisfies certain regularity assumptions. These are $u\in L^1((0,T);W^{1,p}(\Omega))$ with $(\div u)^-\in L^1((0,T);L^{\infty}(\Omega))$, where we have used the superscripted minus sign to denote the negative part of a number. In the following, a solution of \eqref{1} will always be the unique distributional solution constructed in \cite{DiPernaLions89}.
The DiPerna--Lions theory was later extended to vector fields with bounded variation regularity $u \in L^{1}((0,T);BV(\Omega))$ by Ambrosio \cite{Ambrosio04}.

Stability estimates for the continuity equation are very recent. In \cite{Seis16a}, the second author chose an optimal transportation approach that yields quantitative estimates for the distance of two solutions corresponding to nearby velocity fields and nearby initial configurations. The optimality of this approach is discussed in \cite{Seis16b}. The latter works mirror analogous results in the Lagrangian framework derived earlier by Crippa and De~Lellis in \cite{CrippaDeLellis08}. In these quantitative estimates, however, the case $p=1$ (and also $BV$) is excluded.

The present work builds up on \cite{Seis16a}: We study the distance between approximate solutions constructed by the explicit finite volume upwind scheme---a numerical approximation of \eqref{1}---and the unique weak solution to the original problem. An upper bound on this distance thus serves as an estimate for the numerical error. As in \cite{CrippaDeLellis08} and \cite{Seis16a}, we need to restrict to the case $p>1$.

Because our numerical scheme is \emph{explicit}, a stability condition has to be implemented, which requires to control the velocity field \emph{uniformly in space}. We thus impose in addition that $u\in L^1((0,T);L^{\infty}(\Omega))$.

\subsection{The numerical scheme}

The upwind finite volume scheme is the most classical, stable, and monotone numerical approximation of the continuity equation \eqref{1} (see, e.g., \cite{EymardGallouetHerbin00,LeVeque02}). It is formulated on a tessellation of the physical space $\Omega$ and describes the evolution of cell averages
by means of the flux over the cell boundaries.

Even though our result will be valid for Cartesian meshes only, we start with the description of the upwind scheme for quite general mesh geometries. In fact, most parts of our analysis hold true for general meshes. For this reason, we will work under weaker assumptions in the majority of the paper and restrict to the Cartesian setting only where needed. Our hope is to remove this restriction in some future work.

We consider a tessellation $\cT$ of the domain $\Omega$, that is, $\cT$ is a family of closed, connected polyhedral subsets (called control volumes or, simply, cells) of $\R^d$ with disjoint interiors and such that $\overline{\Omega} = \bigcup_{K\in\cT} K$.
The surface of each control volume $K\in\cT$ consists of finitely many flat, closed and connected $d-1$ dimensional faces. If $K$ and $L$ are two neighboring control volumes, we write $K\sim L$. In this case, we use the notation $K\edge L$ to denote the joint edge of $K$ and $L$ and we use $\abs{K\edge L}$ to denote its $(d-1)$-dimensional Hausdorff area $\Ha^{d-1}(K\edge L)$. Moreover, the outward normal on $K$ at the edge $K\edge L$ will be denoted by $\nu_{KL}$, so that $\nu_{KL} = -\nu_{LK}$.
On any edge $K\edge L$, we define the relative inverse length scale $\tau_{KL} = \frac{|K\edge L|}{|K|}$, where by abuse of notation $\abs{K}$ is the usual $d$-dimensional volume of the control element $K$. Finally, the mesh size $h$ is the maximal diameter of the volumes
\begin{equation*}
h = \max_{K\in \cT} \diam K.
\end{equation*}
It is necessary to ensure a certain uniform regularity of the mesh, that essentially guarantees that the control volumes do not degenerate as $h\to 0$. On the level of the numerical analysis presented below, this regularity assumption must imply that geometrical constants in estimates remain bounded as $h\to 0$.
More precisely, we assume that there exists a constant $C>0$ such that the trace estimate
\begin{equation}
\label{e:trace}
\|f\|_{L^1(\partial K)} \le C \left( \|\grad f\|_{L^1(K)} + h^{-1} \|f\|_{L^1(K)}\right)
\end{equation}
holds true uniformly as $h\to0$, for any function $f$ and any cell $K\in\cT$. The proof of the trace estimate is fairly standard and can be found, for instance, in \cite[Chapter 4.3]{EvansGariepy92}. An immediate consequence of this estimate with $f\equiv 1$ is the uniform isoperimetric property of the control elements
\begin{equation}\label{e:isoper}
  \max_{K\in \cT} \frac{\abs{\partial K}}{\abs{K}} \leq \frac{C}{h} .
\end{equation}

For a fixed time step size $\delta t$ specified in \eqref{e:CFL_num} and \eqref{CFL_true} below, we choose $N\in\N$ such that $N\delta t\le T$ and we set $t^n = n\delta t$ for any $n\in \llbracket 0,N\rrbracket$. Here and in the following, we use the notation $\llb a,b\rrb = [a,b]\cap \N_0$, where is $\N_0=\set{0,1,2,\dots}$ are the nonnegative integers.

We discretize the initial datum $\rho_0$ by assigning to each control volume $K\in\cT$ the mean of $\rho_0$ over that volume, i.e.,
\begin{equation}\label{e:disc_initial}
\rho_K^0 = \avint_K \rho_0\, dx.
\end{equation}
The upwind scheme takes into account only the flow over the edges. For each $L\sim K$, we consider the net outflow over $K\edge L$ per time interval $\left[t^n,t^{n+1}\right]$,
\begin{equation}\label{e:def:uKsigma}
u_{KL}^n  = \avint_{t^n}^{t^{n+1}} \avint_{K\edge L} u\cdot \nu_{KL} \,d\Ha^{d-1}\,dt.
\end{equation}
Following the sign conventions for the outward normals, we will sometimes use the antisymmetric relation $u_{KL}^n = - u_{LK}^n$.
We will furthermore distinguish between the flows inwards and outwards the control volume. Hence, for $L\sim K$, we write $u_{KL}^{n+} = (u_{KL}^n)^+$ and $u_{KL}^{n-} = (u_{KL}^n)^-$, where $(q)^+=\max\set{0,q}$ and $(q)^-=-\min\set{0,q}$ denote the positive and the negative part of the quantity $q\in \R$, respectively.

With these preparations, we are now in the position to introduce the explicit upwind finite volume scheme. For $n\in\llb 0,N\rrb$ and $K\in\cT$, we define iteratively
\begin{equation}
\label{e:upwind}
\rho_K^{n+1} = \rho_K^n -\delta t \sum_{L\sim K} \tau_{KL} \left(u_{KL}^{n+} \rho_K^n - u_{KL}^{n-}\rho^n_L\right) .
\end{equation}
Let us stress out, that due to the no flux condition~\eqref{12}, there are no boundary terms present in~\eqref{e:upwind}.
It will be beneficial for our probabilistic interpretation to rewrite the upwind scheme using the identities $u_{KL}^{n-}=u_{LK}^{n+}$ and $\abs{K} \tau_{KL} = \abs{L}\tau_{LK}$ as
\begin{equation}\label{e:upwind2}
\abs{K}\rho_K^{n+1}=  \sum_{L \in \cT}  \abs{L} \rho_L^n p_{LK}^n,
\end{equation}
where $p^n_{KL}$, defined by
\begin{equation}\label{e:def:p_KL}
 p_{KL}^n := \begin{cases}
    1- \delta t \sum_{L\sim K} \tau_{KL} u_{KL}^{n+} & \text{ if $L=K$,} \\
    \delta t \, \tau_{KL}\, u_{KL}^{n+} & \text{ if $L\sim K$,} \\
     0 &\text{ else,}
  \end{cases}
\end{equation}
will play the role of transition probabilities. It is a well-known fact that the upwind scheme is \emph{stable}, if $\delta t$ is chosen according to the Courant--Friedrichs--Lewy (CFL) condition
\begin{equation}\label{e:CFL_num}
  \forall K \in \cT : \qquad \sum_{L\sim K} p_{KL}^n = \delta t \sum_{L\sim K} \tau_{KL} u_{KL}^{n+} \leq 1.
\end{equation}
We will recall the proof of  stability in Lemma \ref{lem:prop_scheme} below. For our analysis, it will be convenient to impose a slightly stronger condition, which does not depend on the mesh. Therefore, we demand for some finite constant $C$
\begin{equation}
\label{CFL_true}
\max_{n\in\llbracket0,N\rrbracket} \delta t \, u_\infty^n \le C h \qquad\text{where}\qquad u^n_\infty := \avint_{t^n}^{t^{n+1}} \|u\|_{L^{\infty}}\, dt,
\end{equation}
which in a certain sense is sufficient for \eqref{e:CFL_num} in the case of Sobolev vector fields (cf.\ Lemma~\ref{lem:CFL}). The latter  condition in particular implies that in every time step the maximal length of each path line is at most of order $h$ (see, e.g., the proof of Lemma~\ref{L1}).

The {\em approximate solution} $\rho_h$ is defined in such a way that
\begin{equation}\label{e:rho_h}
\rho_h(t,x) = \rho^n_h(x)  = \rho^n_K\qquad\mbox{for almost every }(t,x)\in[t^n,t^{n+1})\times K.
\end{equation}
We will also write $\rho_h^0 = \rho_h(0,\tacka)$ for the approximate initial datum.

\subsection{Main results}

In the following,  we describe and interpret our main result. As announced in the previous subsection, our results are valid for Cartesian meshes only. It is thus necessary to restrict the admissible geometries for the domain $\Omega$. We call $\Omega$ \emph{compatible} to Cartesian tessellations, if it is a finite disjoint union of isometric axis-parallel rectangular boxes. In this case, $\Omega$ can be covered by control volumes $K\in \cT$, which are of the form $K = [a_1, a_1+ h_1]\times \dots\times [a_d,a_d + h_d]$ with edge lengths $h_i$ satisfying $h \le C h_i$ uniformly in $h$. The latter condition is equivalent to~\eqref{e:trace} via~\eqref{e:isoper}.

We are now in the position to state our main result.
\begin{theorem}\label{T1}
Suppose that $\Omega$ is a bounded domain in $\R^d$ that is compatible to Cartesian tessellations.

Let $p\in(1,\infty]$ and $q\in[1,\infty)$ be dual H\"older exponents, i.e., $1/p+1/q=1$. Let $u:[0,T]\times \Omega\to \R^d$ be such that
\begin{equation}
\label{e:ass:u}
\begin{cases} u\in L^1((0,T);L^{\infty}(\Omega)),\\
 \grad u\in L^1((0,T);L^p(\Omega)), \\
(\div u)^-\in L^1((0,T);L^{\infty}(\Omega)), \\
u \cdot \nu = 0  \text{ on } \partial\Omega,
\end{cases}\end{equation}
and let $\rho_0:\Omega\to \R$ be such that
\[
\rho_0 \in L^q(\Omega).
\]

Let $\rho, \rho_h: [0,T]\times\Omega\to \R$ denote, respectively, the solution to the continuity equation~\eqref{1} and the approximate solution defined by the explicit upwind finite volume scheme \eqref{e:disc_initial}, \eqref{e:def:uKsigma}, \eqref{e:upwind},  \eqref{e:rho_h} on a Cartesian mesh with mesh size $h$. Suppose that the CFL conditions \eqref{e:CFL_num} and \eqref{CFL_true} are satisfied.

Then there exists a constant $C$ such that for any $r>0$ and any $t\in [0,T]$, it holds
\begin{equation}\label{e:result}
\begin{aligned}
\MoveEqLeft \inf_{\pi\in\Pi(\rho(t,\cdot), \rho_{h}(t,\cdot))} \iint \log\left(\frac{|x-y|}{r} +1\right)\, d\pi(x,y)\\
& \le C\left(1+\frac{\sqrt{h \|u\|_{L^1(L^\infty)}} + h}{r}\right)\bra*{ \| \rho_0\|_{L^1} +  \Lambda^\frac1p \, \|\rho_0\|_{L^q}\, \| u\|_{L^1(W^{1,p})}},
\end{aligned}
\end{equation}
where $\Lambda = \exp\left(\|(\div u)^-\|_{L^1(L^{\infty})}\right)$.
\end{theorem}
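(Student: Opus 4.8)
\emph{Strategy.} By the linearity of~\eqref{1} and of the scheme, together with the properties of the Kantorovich--Rubinstein cost collected in Section~\ref{S:5} (in particular its $1$-homogeneity in the measures and the triangle inequality), it suffices to treat $\rho_0\ge 0$; normalizing $\|\rho_0\|_{L^1}=1$ and multiplying the final estimate by $\|\rho_0\|_{L^1}$, both $\rho(t,\cdot)$ and $\rho_h(t,\cdot)$ become probability measures, since mass is conserved in the continuous dynamics and, by $\sum_K p_{KL}^n=1$, in the discrete one as well. I would then represent both as push-forwards of the common initial measure under, respectively, the regular Lagrangian flow of $u$ (Section~\ref{S:3}) and the upwind random walk read off from the Markov form~\eqref{e:upwind2}--\eqref{e:def:p_KL} of the scheme; coupling the two by the \emph{same} random starting point produces an admissible transport plan $\pi$ in~\eqref{e:result}, and the whole estimate reduces to controlling the expectation $\EE[\log(|X(t,\xi)-Z|/r+1)]$ of the transport distance between the two particles. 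This expectation is estimated by a discrete Gronwall argument over the time steps, fed by two mechanisms: the Crippa--De~Lellis maximal-function trick, which absorbs the mere $W^{1,p}$-regularity of $u$ (and forces $p>1$, through $\|Mf\|_{L^p}\lesssim\|f\|_{L^p}$) and produces the term with the factor $\Lambda^{1/p}\|\rho_0\|_{L^q}\|u\|_{L^1(W^{1,p})}$; and a variance bound for the random walk, whose accumulated numerical diffusion over $[0,T]$ is at most $Ch\|u\|_{L^1(L^\infty)}$ and is responsible for the $\sqrt{h\|u\|_{L^1(L^\infty)}}$ contribution.

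\emph{The coupling.} On the continuous side I use the representation $\rho(t,\cdot)\,dx=X(t,\cdot)_\#(\rho_0\,dx)$ and the $L^q$-bound $\|\rho(t,\cdot)\|_{L^q}\le\Lambda^{1/p}\|\rho_0\|_{L^q}$ from Section~\ref{S:3}. On the discrete side, under the CFL condition~\eqref{e:CFL_num} (guaranteed by~\eqref{CFL_true}, cf.\ Lemmas~\ref{lem:prop_scheme} and~\ref{lem:CFL}) the numbers $p_{KL}^n$ are transition probabilities, and if $(K_n)$ is the corresponding Markov chain on cells with $\PP(K_0=K)=|K|\rho_K^0$ then $|K|\rho_K^n=\PP(K_n=K)$ by~\eqref{e:upwind2}. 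Since the Cartesian cells are congruent, a uniform variable $\eta$ on the centred reference box is independent of the chain, and the particle $Z_n:=x_{K_n}+\eta$ (with $x_{K_n}$ the centre of $K_n$) has law exactly $\rho_h^n\,dx$ while still moving by the genuine random walk $Z_{n+1}-Z_n=x_{K_{n+1}}-x_{K_n}$; the absolute continuity of the law of $Z_n$ is what will let me test $\grad u$ against it (an atomic law concentrated at cell centres would not suffice). Sampling $\xi\sim\rho_0\,dx$, letting $K_0$ be its cell and $\eta=\xi-x_{K_0}$ (so $|Z_0-\xi|\le h$, which accounts for the initial-data error), and running the chain, the joint law of $(X(t^n,\xi),Z_n)$ has marginals $\rho(t^n,\cdot)\,dx$ and $\rho_h^n\,dx$; this is the plan $\pi$. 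For $t\in[t^n,t^{n+1})$ one couples $X(t,\xi)$ with $Z_n$, which is legitimate since $|X(t,\xi)-X(t^n,\xi)|\le\delta t\,u_\infty^n\le Ch$ by~\eqref{CFL_true}.

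\emph{The Gronwall step.} Set $d_n:=|X(t^n,\xi)-Z_n|$. Over one step $X(t^{n+1},\xi)-X(t^n,\xi)=\int_{t^n}^{t^{n+1}}u(t,X(t,\xi))\,dt$, whereas $\EE[Z_{n+1}-Z_n\mid\mathcal F_n]=\delta t\sum_{L\sim K_n}\tau_{K_nL}u_{K_nL}^{n+}(x_L-x_{K_n})$ and $\EE[|Z_{n+1}-Z_n-\EE[\,\cdot\mid\mathcal F_n]|^2\mid\mathcal F_n]\le Ch\,\delta t\,u_\infty^n$ (using $u_{KL}^{n+}\le u_\infty^n$). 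On a Cartesian mesh $\tau_{K_nL}|x_L-x_{K_n}|=1$, so the numerical drift is $\delta t\sum_L u_{K_nL}^{n+}\nu_{K_nL}$, which by the trace estimate~\eqref{e:trace} and Poincar\'e on $K_n$ equals $\delta t\avint_{t^n}^{t^{n+1}}\avint_{K_n}u$ up to a \emph{consistency error} bounded by $Ch\,\delta t\avint_{t^n}^{t^{n+1}}\avint_{K_n}|\grad u|$. Writing
\[
X(t^{n+1},\xi)-Z_{n+1}=(X(t^n,\xi)-Z_n)+\int_{t^n}^{t^{n+1}}\!\!\Bigl(u(t,X(t,\xi))-\avint_{K_n}u(t,\cdot)\Bigr)dt+(\text{consistency error})+(\text{mean-zero increment}),
\]
applying to the increment of $s\mapsto\log(s/r+1)$ the bounds $\frac{d}{ds}\log(s/r+1)=\frac1{s+r}\le\min\{\frac1r,\frac1s\}$ and $|u(t,x)-u(t,y)|\le C|x-y|\bigl(M|\grad u|(t,x)+M|\grad u|(t,y)\bigr)$, together with $|X(t,\xi)-y|\le d_n+Ch$ for $y\in K_n$, one finds that the drift correction contributes at most $C(1+\tfrac hr)\int_{t^n}^{t^{n+1}}\bigl(\EE[M|\grad u|(t,X(t,\xi))]+\EE[\avint_{K_n}M|\grad u|(t,\cdot)]\bigr)dt$; bounding $\EE[M|\grad u|(t,X(t,\xi))]$ by $\|M|\grad u|(t)\|_{L^p}\|\rho(t,\cdot)\|_{L^q}$ and $\EE[\avint_{K_n}M|\grad u|(t,\cdot)]$ by $\|M|\grad u|(t)\|_{L^p}\|\rho_h^n\|_{L^q}$ (Jensen cell by cell, plus the discrete $L^q$-stability $\|\rho_h^n\|_{L^q}\le C\Lambda^{1/p}\|\rho_0\|_{L^q}$ of the scheme from Section~\ref{S:4}) and summing in $n$ produces the term carrying the $\Lambda^{1/p}\|\rho_0\|_{L^q}\|u\|_{L^1(W^{1,p})}$ factor with an $O(1)$ coefficient; the consistency error and the initial error $d_0\le h$ analogously produce the $\tfrac hr$ part. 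What remains is to account for the mean-zero increments.

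\emph{Main obstacle.} This last point is the heart of the matter. Because $z\mapsto\log(|z|/r+1)$ is \emph{not} concave near the origin---its Hessian has a tangential eigenvalue $(|z|(|z|+r))^{-1}$ that blows up as $z\to0$---a naive It\^o/Taylor estimate of the mean-zero increments is unavailable, and summing the per-step standard deviations $\sqrt{Ch\,\delta t\,u_\infty^n}$ would cost an extra factor $\sqrt N$. I would instead run the Gronwall for $\EE[\log(\sqrt{d_n^2+\sigma_n^2}/r+1)]$, with $\sigma_n^2$ the accumulated numerical variance, $\sigma_{n+1}^2-\sigma_n^2\simeq h\,\delta t\,u_\infty^n$ and $\sigma_0\simeq h$: the regularization bounds the Hessian of $z\mapsto\log(\sqrt{|z|^2+\sigma_n^2}/r+1)$ by $\bigl(\sqrt{|z|^2+\sigma_n^2}\,(\sqrt{|z|^2+\sigma_n^2}+r)\bigr)^{-1}$, and one checks that the increase of this functional when passing from $\sigma_n$ to $\sigma_{n+1}$ dominates the It\^o correction $\tfrac12\|D^2\|\,\EE[|\text{increment}|^2\mid\mathcal F_n]$. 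At the final step $\sigma_N\le h+\sqrt{Ch\|u\|_{L^1(L^\infty)}}$, and $\log(\sqrt{d_N^2+\sigma_N^2}/r+1)\le\log(d_N/r+1)+\log(\sigma_N/r+1)\le\log(d_N/r+1)+\tfrac{\sigma_N}r$ recovers the genuine functional at the announced cost $\tfrac{\sqrt{h\|u\|_{L^1(L^\infty)}}+h}{r}$. Combining the drift contribution (coefficient $O(1)$) with this one (coefficient $1/r$), multiplying back by $\|\rho_0\|_{L^1}$, and passing from the grid times to an arbitrary $t\in[0,T]$ as in the proof of Lemma~\ref{L1} yields~\eqref{e:result}. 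The rest is bookkeeping: keeping all geometric constants uniform as $h\to0$ (which is exactly what~\eqref{e:trace}--\eqref{e:isoper} and the congruence of the Cartesian cells provide) and checking that each of the several $O(h)$ discretization errors---initial data, time-averaging of $u$ in $u_{KL}^n$, and the motion of $X$ within a step---is dominated by the $\tfrac hr(\cdots)$ term.
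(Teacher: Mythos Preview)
Your overall architecture---reduce to $\rho_0\ge0$, couple the Lagrangian flow with a random walk realizing the scheme, and run a Gronwall on the logarithmic cost---matches the paper's. The construction $Z_n=x_{K_n}+\eta$ is a legitimate (and slightly simpler) alternative to the paper's jump kernel~\eqref{e:JumpKernel}; both produce the law $\rho_h^n\,dx$ with the same conditional mean and variance. The drift analysis via the Crippa--De~Lellis maximal function, the consistency error on Cartesian cells via~\eqref{e:netflow:cubes}, and the $L^q$-stability of the scheme are all as in the paper (Lemmas~\ref{L5}--\ref{L7}).

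Where you diverge is in the treatment of the mean-zero increments, and here your write-up is muddled. You propose to track $g_n=\EE[\log(\sqrt{d_n^2+\sigma_n^2}/r+1)]$ and claim that the increase from $\sigma_n$ to $\sigma_{n+1}$ \emph{dominates} the It\^o correction. But both contributions are \emph{positive} and of the same order $h\,\delta t\,u_\infty^n\big/\sqrt{d_n^2+\sigma_n^2}(\sqrt{d_n^2+\sigma_n^2}+r)$; they do not cancel. What actually happens is that their sum over $n$ is controlled by $\tfrac1r\sum_n(\sigma_{n+1}^2-\sigma_n^2)/\sigma_{n+1}\lesssim\sigma_N/r$, which is the correct rate---but this is not what you wrote. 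Your final step is also inverted: you bound $\log(\sqrt{d_N^2+\sigma_N^2}/r+1)$ from above by $\log(d_N/r+1)+\sigma_N/r$, whereas what you need is the trivial $\log(d_N/r+1)\le g_N$; the $\sigma_N/r$ cost enters through the Gronwall, not through ``recovering'' the functional. With these corrections the regularization route does close.

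The paper avoids all of this. It uses only the first-order concavity inequality $\log(|a+b|/r+1)\le\log(|a|/r+1)+|b|/(|a|+r)$, but applies it at each step with the martingale increment placed in the \emph{$a$-slot}: writing $\phi_{t^\ell}-\psi^\ell=(\phi_{t^{\ell-1}}-\psi^{\ell-1}-\xi^{\ell-1})+(\text{drift})$ and iterating, the $\xi$'s accumulate inside the logarithm (see~\eqref{21}). One is left with $\EE[\log(|\sum_\ell\xi^\ell|/r+1)]\le\tfrac1r\EE[|\sum_\ell\xi^\ell|]$, and a Burkholder--Davis--Gundy estimate (Lemma~\ref{L8}) gives $\EE[|\sum_\ell\xi^\ell|]\lesssim\sqrt{h\|u\|_{L^1(L^\infty)}}+h$. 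The drift term $T_2$ in~\eqref{23} then has the martingale sum in the \emph{denominator}, and is handled by H\"older's inequality combined with the same martingale moment bound. This bypasses Hessian estimates and the delicate constant-matching your approach would require.
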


Here we have dropped the dependency of norms on the domains for notational convenience. For instance, $L^1(L^p) = L^1((0,T);L^p(\Omega))$. We will stick to this convention in the following. In the case where $\Omega$ is convex, the term $\|u\|_{L^1(W^{1,p})}$ can be replaced by $\|\grad u\|_{L^1(L^p)}$, which is consistent with regard to dimensions.

The statement involves the notion of a Kantorovich--Rubinstein distance. The infimum on the left-hand side of \eqref{e:result} is taken over so-called transport plans $\pi$ that are joint measures on the product space $\Omega\times \Omega$ with marginals $(\rho-\rho_h)^+$ and $(\rho-\rho_h)^-$. Roughly speaking, the quantity on the left-hand side measures the minimal total cost that is necessary to transfer the configuration $\rho$ into the configuration $\rho_h$, if the transport over a distance $z$ costs $\log\left(z/r+1\right)$. Finding and characterizing the ``best'' transport plan $\pi$ is a central question in the theory of optimal transportation. The minimal total cost is a mathematical {\em distance} between $\rho$ and $\rho_h$ and \emph{metrizes weak convergence}. In Section~\ref{S:5} below, we will review properties of this (Kantorovich--Rubinstein) distance function that are relevant for the comprehension of this paper.

With the understanding that Kantorovich--Rubinstein distances metrize weak convergence, the statement in Theorem \ref{T1} can be seen as an {\em estimate on the order of weak convergence} of approximate solutions defined by the upwind finite volume scheme towards the unique solution to the continuous problem. For finite time intervals and small mesh sizes, the $\sqrt{h}$ term dominates the convergence rate as $ \sqrt{h \|u\|_{L^1(L^\infty)}} \gg h$. For $r = \sqrt{h} \sim  \sqrt{h \|u\|_{L^1(L^\infty)}}$, the right-hand side becomes independent of $h\ll 1$ and the estimate turns into
\[
 \inf_{\pi\in\Pi(\rho(t,\cdot), \rho_{h}(t,\cdot))} \iint \log\left(\frac{|x-y|}{\sqrt{h}} +1\right)\, d\pi(x,y) \le C,
 \]
uniformly in $h\ll 1$ and $t\le T$ for some fixed finite $T$. In words: Under the assumptions of the theorem, \emph{the order of weak convergence of approximate solutions towards the solution of the continuous problem is at most $1/2$}.

We will see in Section \ref{Ex1} below that our result is optimal in two regards: On the one hand, we will show that for general rough initial data one cannot expect convergence rates in strong norms. More precisely,  for any $s\in(0,1)$ we find  an initial configuration in $L^1$ such that
\[
h^{s-1}\|\rho-\rho_h\|_{L^1(L^1)} \noto 0\quad
\] 
as $h\to 0$. On the other hand, we can show for the same example that the rate of weak convergence is at least of order $h^{1-s/2}$.
This coincides with our findings in the limiting case where $s\nearrow 1$.

Order $1/2$ convergence for the upwind scheme has been known for a long time in the case of regular (e.g., Lipschitz) vector fields: Even though the scheme is formally order~$1$, for nonsmooth initial configurations, the optimal convergence rate falls down to $1/2$. Among the many papers proving this result, we mention \cite{Kuznecov76,Peterson91,VilaVilledieu03,Despres04,
MerletVovelle07,Merlet07,CockburnDongGuzmanQian10,
DelarueLagoutiere11,DelarueLagoutiereVauchelet16,AB16}. To the best of our knowledge, in this paper, we provide the first analytical results on the convergence rates in the case of vector fields with spatial Sobolev regularity. Numerical evidence for this rate was reported earlier by Boyer~\cite{Boyer12}, see also Subsection \ref{S:numerics} below. Convergence (without rates) of the scheme in the DiPerna--Lions setting was obtained by Walkington \cite{Walkington05} and Boyer \cite{Boyer12}.

The reason for the loss in the convergence rate for nonsmooth initial data is the occurrence of {\em numerical diffusion}. In a certain sense, the approximate scheme behaves like the diffusive approximation
\[
\partial_t \rho + \div\left(u\rho\right)  - h\laplace \rho = 0,
\]
which on the level of the Lagrangian variables is understood as a stochastic differential equation
\begin{equation}\label{intro:SDE}
d \psi_t  = u(t,\psi_t)\, dt + \sqrt{2h}\, dW_t ,
\end{equation}
where $W_t$ is a Brownian motion in $\R^n$. This motivates that the upwind scheme has a probabilistic interpretation.
Recently, Delarue, Lagouti\`ere and Vauchelet~\cite{DelarueLagoutiereVauchelet16} interpreted the upwind scheme in the form~\eqref{e:upwind2} as Kolmogorov forward equation of a Markov chain on the mesh $\cT$ with jump probabilities given by $p_{KL}^n$~\eqref{e:def:p_KL}. By doing so, they were able to prove an $h^{1/2}$-rate of convergence for the upwind scheme applied to the continuity equation with a one-sided Lipschitz vector field. In our case of Sobolev vector fields, we define $\psi_t$ as a continuous state Markov chain on $\Omega$ with a suitable jump kernel between the elements of the mesh. This Markov chain is a time-discretized version of the stochastic differential equation~\eqref{intro:SDE} with a noise term still depending on the details of the mesh (cf.\ Lemma \ref{lem:ave:Psi} and \eqref{e:Psi:difference} below). Moreover, this noise term determines the $h^{1/2}$-rate of convergence (cf.\ Lemma \ref{L8} below).

At the end of this subsection, we try to convince the reader  that estimates on  logarithmic distances as the one in our main result appear quite naturally in the context of continuity and transport equations. We will do so on the Lagrangian level, that is, we consider the flows (cf.\ \eqref{e:flow:sol} below) $\phi$ and $\phi_h$ of two bounded Lipschitz vector fields $u$ and $u_h$. It is not difficult to see that
\begin{equation}
\label{e:log_estimates}
\log\left(\frac{|\phi(t,x) - \phi_h(t,x)|}r +1\right) \le \|\grad u\|_{L^1(L^{\infty})} + \frac{1}r\|u-u_h\|_{L^1(L^{\infty})},
\end{equation}
uniformly in $t$ and $x$. Hence, choosing $r= r_h = \|u-u_h\|_{L^1(L^{\infty})}$, we see that the velocity gradient controls the logarithmic distance of trajectories relative to the distance of the vector fields~$r_h$. The argument for this estimate is straight-forward and follows from the calculation
\[
\left|\frac{d}{dt} \log\left(\frac{|\phi(t,x) - \phi_h(t,x)|}r +1\right)\right| \le \frac{|u(t,\phi(t,x)) - u_h(\phi_h(t,x))|}{|\phi(t,x) - \phi_h(t,x)| + r}
\]
by triangle inequality and integration. A generalization of \eqref{e:log_estimates} to the case where $u$ has spatial Sobolev regularity is due to Crippa and De~Lellis \cite{CrippaDeLellis08}. In the Eulerian framework, an analogous estimate was derived recently in \cite{Seis16a,Seis16b}.

\subsection{Numerical experiments} \label{S:numerics}

We now present some numerical findings in favor of our analytical results. In two series of experiments, we applied the explicit upwind finite volume scheme to initial data with jump discontinuities. In the first series, the scheme is run with a constant vector field, in the second one, we used a stationary H\"older regular field belonging to $W^{1,p}$ for any $p$ with $1\le p<2$. In both cases we find that the convergence rate is at least of order $1/2$ if the error is measured in the $L^1$ as well as the $H^{-1}$ norm.

\begin{figure}[!b]\label{fig1}
\centering
\subfloat[Exact solution ]{\includegraphics[scale=.07]{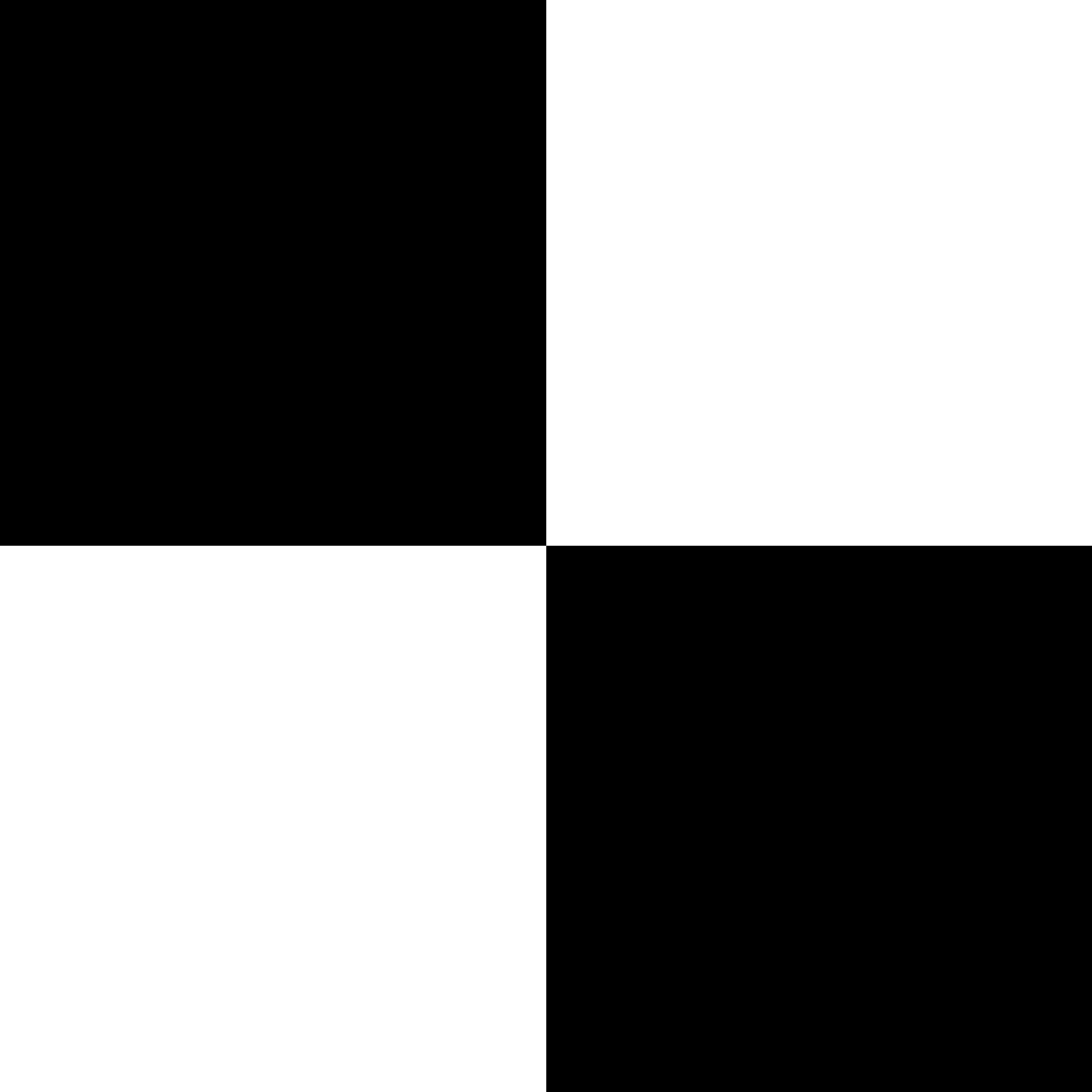}}
\hspace{2em}
\subfloat[Constant vector field $u_c$]{\includegraphics[scale=.07]{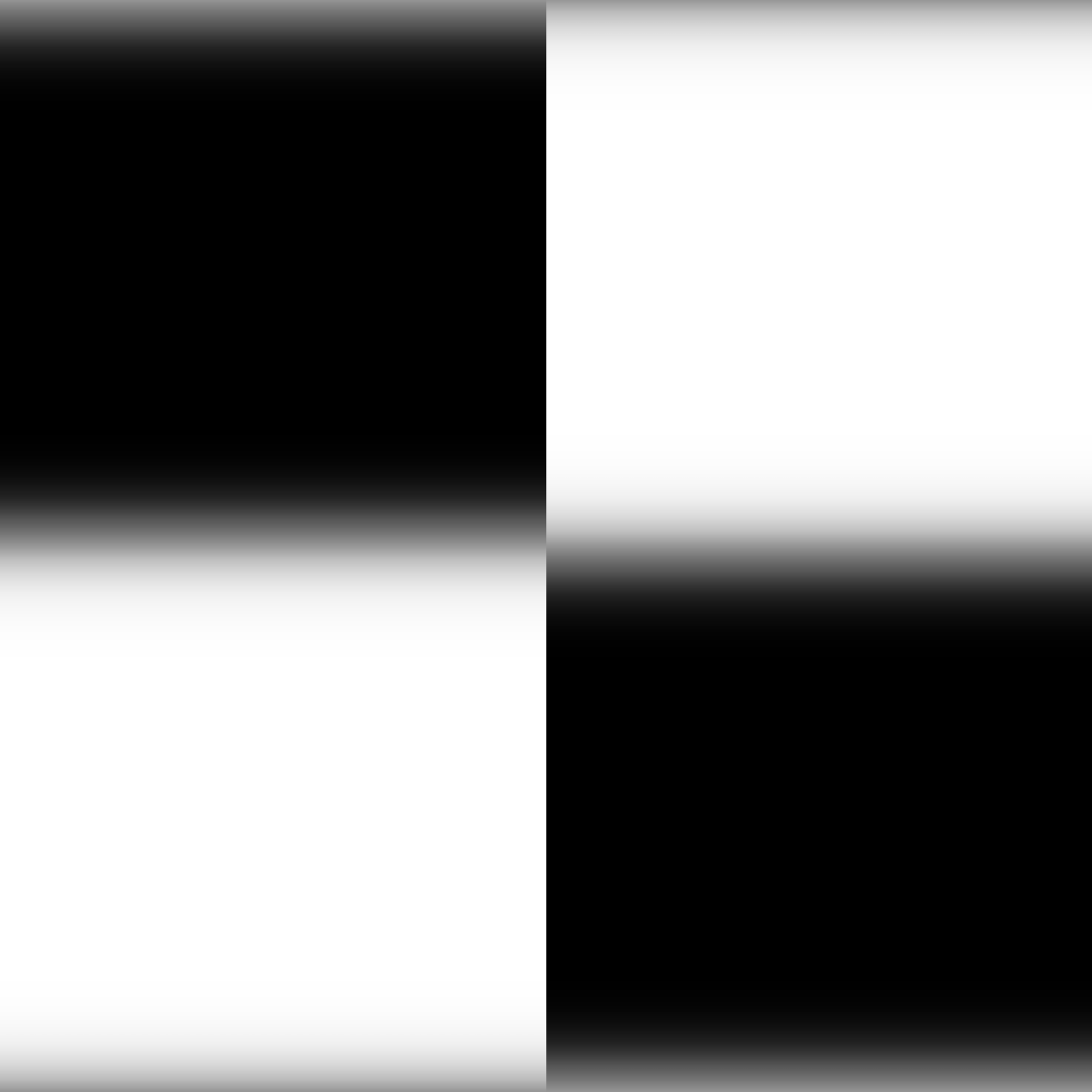}}
\hspace{2em}
\subfloat[Sobolev vector field $u_S$]{\includegraphics[scale=.07]{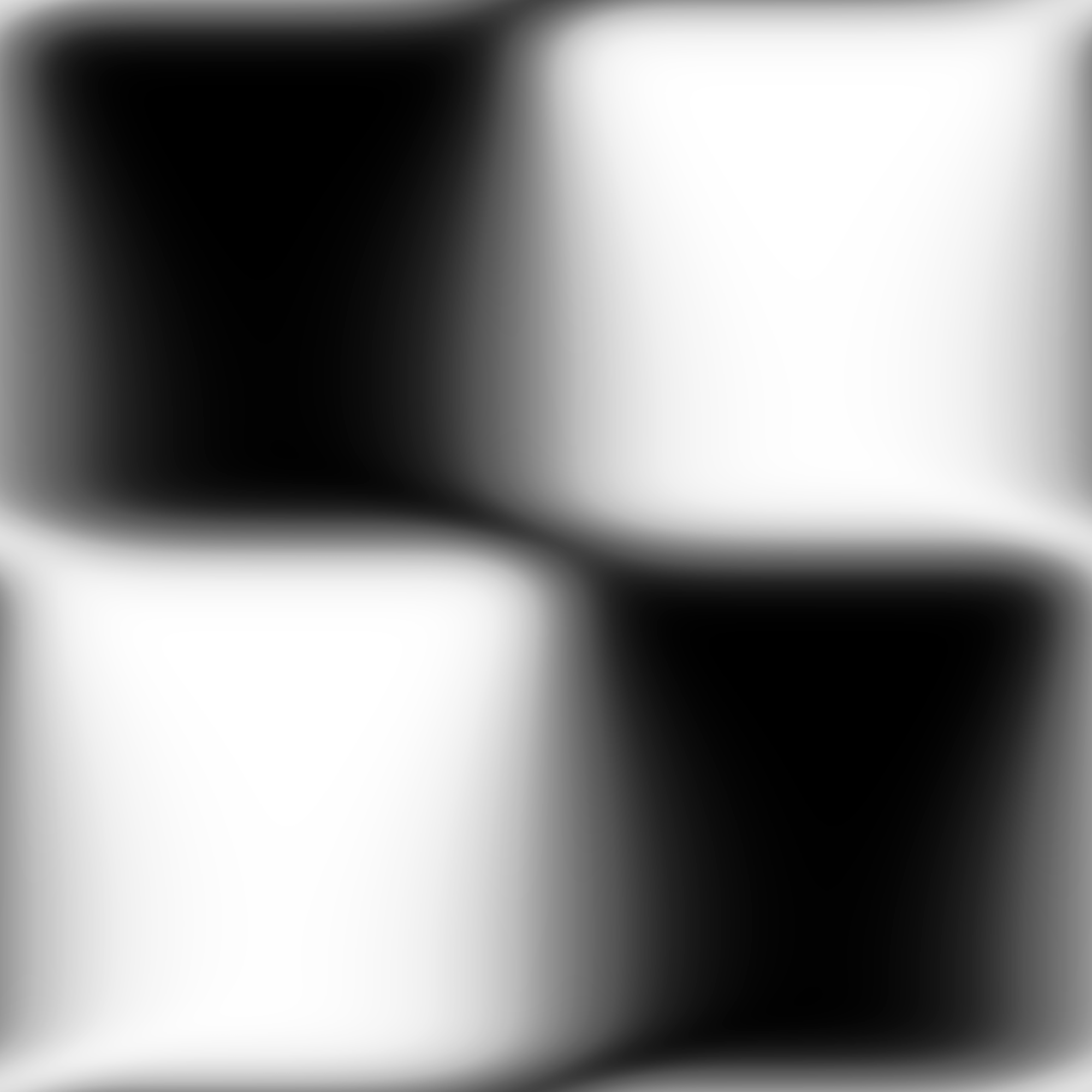}}
\caption{Illustration of numerical diffusion of upwind scheme (Mesh size $h = 2^{-10}$).}
\end{figure}

Let us describe our experiments in more detail. We consider the continuity equation on the two-dimensional unit torus $\Omega \cong [0,1)_{\per}^2$. As initial datum, we consider
\[
\rho_0(x)  = \begin{cases} 1 &\mbox{if }x\in \left[0,\frac12\right)^2 \cup \left[\frac12,1\right)^2,\\
-1 & \mbox{else.}\end{cases}
\]
In our first series of experiments, we run the experiment with the constant vector field $u_c = (0,1)^T$. In the second experiment, we choose $u_S = (v,.5)^T$ with $v=v(x_2)$ given by
\[
v(x_2) = \begin{cases} \sin^{\frac12}\left(2\pi x_2\right) &\mbox{if }x_2\in \left[0,\frac12\right),\\
-\sin^{\frac12}\left(-2\pi x_2\right) &\mbox{if }x_2\in \left[\frac12,1\right).
\end{cases}
\]
It is clear that $v$ is H\"older continuous with exponent $1/2$ and  belongs to $W^{1,p}$ for any $1\le p <2$.
In both cases, we flip the sign of the vector field at time $T=1$. As the continuity equation is time reversible, the exact solution reaches the initial state at time $T=2$. Notice that both vector fields are divergence-free with $\|u\|_{L^{\infty}}=1$.

We run the simulations on a Cartesian mesh of size $h$ ranging from $2^{-11}$ to $2^{-5}$. The time step size is fixed to $\delta  t = h/4$.

Figure \ref{fig1} illustrates the effect of the numerical diffusion in the upwind scheme in both experimental series.
The reference is the exact solution displayed in the plot on the left. The plot in the middle is computed with the upwind scheme using $u_c$, the plot on the right is computed using $u_S$. As it is clear from the definition of the scheme, diffusion can only happen in the direction of the flow. For this reason, the vertical transitions at $x_1=0$ and $x_1=1/2$ remain sharp under the constant vector field $u_c$.

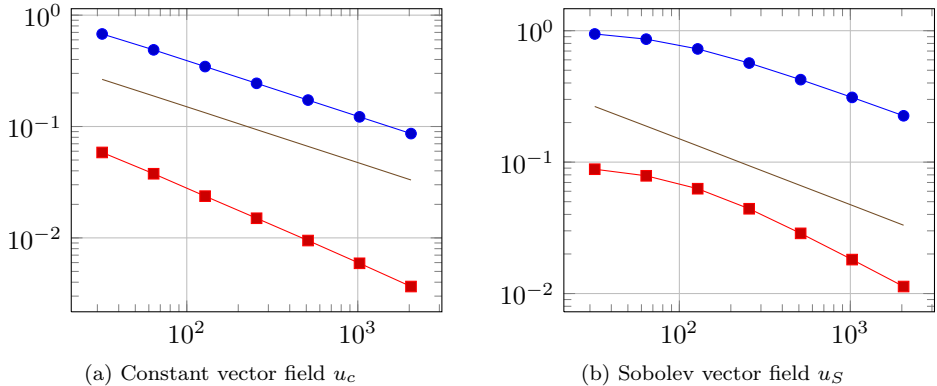
\begin{figure}[!b]\label{fig2}
\centering
\subfloat[Constant vector field $u_c$]{\begin{tikzpicture}
\begin{loglogaxis}[width=0.5\textwidth, grid=major]
\addplot table [x=meshsize,y=L1]{data_constant.csv};\label{p:const:L1}
\addplot table [x=meshsize,y=H-1]{data_constant.csv};\label{p:const:H-1}
\addplot+[mark=] table [x=meshsize,y=Rate]{data_constant.csv};\label{p:const:rate}
\end{loglogaxis}
\end{tikzpicture}}
\hspace{1.5em}
\subfloat[Sobolev vector field $u_S$]{\begin{tikzpicture}
\begin{loglogaxis}[width=0.5\textwidth, grid=major]
\addplot table [x=meshsize,y=L1]{data_sobolev.csv};
\addplot table [x=meshsize,y=H-1]{data_sobolev.csv};
\addplot+[mark=] table [x=meshsize,y=Rate]{data_constant.csv};
\end{loglogaxis}
\end{tikzpicture}}
\caption{Computation of the numerical error versus $1/h$ measured in terms of the $L^1$ norm (\ref{p:const:L1}) and the $H^{-1}$ norm (\ref{p:const:H-1}); as a reference we display the line $h^{1/2}$ (\ref{p:const:rate}).}
\end{figure}
Figure \ref{fig2} shows the computation of the final numerical error versus $1/h$ measured both in the $L^1$ norm and the (homogeneous) $H^{-1}$ norm.
The $H^{-1}$ norm metrizes weak convergence just as the Kantorovich--Rubinstein distance, though both measures are in general not equivalent. Indeed, on the one hand, Kantorovich--Rubinstein distances with concave cost functions are often bounded by the $H^{-1}$ norm. In our case, the sublinearity of the logarithm implies the bound
\[
\inf_{\pi}\iint \log\left(\frac{|x-y|}r+1\right)\, d\pi \le \inf_{\pi} \iint \frac{|x-y|}r\, d\pi  = \frac1r \|\rho-\rho_h\|_{\dot W^{-1,1}} \le \frac1r\|\rho-\rho_h\|_{\dot H^{-1}},
\]
because $|\Omega|=1$. See also \cite[Lemma 1]{Seis13b}.
Here, $\|\rho-\rho_h\|_{\dot W^{-1,1}}$ in its primal representation is also know as the Wasserstein distance $W_1(\rho,\rho_h)$ with cost function $c(x,y):=\abs{x-y}$. On the other hand, if $\rho_h$ is converging to $\rho$ weakly (in the sense of measures), then the Fourier coefficients $\mathcal{F}(\rho-\rho_{h})(k) = \int_{[0,1)^2} e^{-ix\cdot k} (\rho-\rho_{h})(x)\, dx$ are vanishing pointwise. In particular, using the Fourier representation of the $H^{-1}$ norm in the periodic setting, it holds for any $K\in 2\pi \N$ that
\[
\|\rho-\rho_h\|_{\dot H^{-1}}^2  = \ \ \sum_{\mathclap{k\in (2\pi \Z)^2}}\ \ |k|^{-2} |\mathcal{F} (\rho-\rho_h)(k)|^2 \le \ \ \sum_{\mathclap{|k|\le  K}}\ \  |k|^{-2} |\F(\rho-\rho_h)(k)|^2 +  K^{-2}\|\rho-\rho_h\|_{L^2}.
\]
Because $\rho$ and $\rho_h$ are both bounded in $L^2$, by choosing $K$ sufficiently large, the latter shows that $\rho_h$ is converging to $\rho$ in $H^{-1}$.

In our numerical tests, we have chosen the $H^{-1}$ norm over the Kantorovich--Rubinstein distance as the latter is particularly easy to compute numerically. 
The computation in Figure \ref{fig2} shows that in both experiments, the numerical error does not exceed the order $1/2$ in the regime of small mesh sizes. Moreover, we observe that the $H^{-1}$~decay is slightly steeper than the $L^1$ decay. We interpret this feature with a certain enhanced ``mixing effect'' caused by the scheme with rough vector fields .

\section{Properties of the continuous model}\label{S:3}

If $u: [0,T]\times \Omega\to \R^d$ is a smooth vector field on the bounded Lipschitz domain $\Omega$ in $\R^d$, then the {\em flow of u} is the mapping $\phi: [0,T]\times \Omega\to \R^d$ solving the ordinary differential equation
\begin{equation}
\label{e:flow:sol}
\partial_t \phi(t,x) = u(t,\phi(t,x)),\quad \phi(0,x) = x.
\end{equation}
Thus $t\mapsto \phi(t,x)$ is the trajectory of a particle transported by $u$ and starting at $x\in\Omega$. The condition that $u$ is tangential at the boundary, see \eqref{12}, guarantees that there is no flow out of the domain.

In the present paper, we consider vector fields under low regularity assumptions. We recall from \eqref{e:ass:u} that $u$ is uniformly bounded and weakly differentiable in the spatial variable, and it is
{\em nearly incompressible} in the sense that
\begin{equation}
\label{10}
\lambda:= \|(\div u)^-\|_{L^1((0,T);L^{\infty}(\Omega))} <\infty.
\end{equation}
Under these assumptions, a generalized notion of a solution of \eqref{e:flow:sol}  is needed:
A mapping $\phi: [0,T]\times \Omega\to \R^d$ is called a {\em regular Lagrangian flow}, if
\begin{enumerate}
\item for a.e.\ $x\in\Omega$, the mapping $t\mapsto \phi_t(x):=\phi(t,x)$ is an absolutely continuous integral solution, i.e.,
\[
\phi(t,x) = x  + \int_0^t u(s,\phi(s,x))\, ds\quad\mbox{for all }t\in[0,T];
\]
\item there exists a constant $\Lambda$ independent of $t \in [0,T]$ such that
\[
 \L^d(\phi_t^{-1}(A)) \le \Lambda \L^d(A)\quad\mbox{for any Borel subset $A$ of $\Omega$}.
\]
The constant $\Lambda$ is often called the compressibility constant of $\phi$.
\end{enumerate}
Existence, uniqueness and stability of regular Lagrangian flows in the setting of our paper have been proved by DiPerna and Lions \cite{DiPernaLions89} (for vector fields with bounded divergence), see also \cite{CrippaDeLellis08} for a quantitative approach (under the milder assumption \eqref{10}).

The compressibility assumption \eqref{10} implies that the generalized Jacobian
\begin{equation*}
J\phi_t(x)  = \exp\left(\int_0^t (\div u)(s,\phi_t(x))\, ds\right)
\end{equation*}
is bounded below,
\begin{equation}
\label{14}
e^{-\lambda} \le J\phi_t(x),
\end{equation}
and for any $f\in L^1(\Omega)$ we have thanks to the boundary condition~\eqref{12} the change of variable formula
\begin{equation}
\label{15}
\int_{\Omega} f(\phi_t(x)) J\phi_t(x)\, dx = \int_{\Omega} f(x)\, dx.
\end{equation}
 Notice that we may choose $\Lambda=e^{\lambda}$ in the definition of regular Lagrangian flows. A comprehensive analysis of the generalized Jacobian can be found in \cite{ColomboCrippaSpirito15}.

With the help of regular Lagrangian flows, solutions to the continuity equation~\eqref{1} take on an elegant form. Indeed, if $\rho$ is the unique solution with initial datum $\rho_0$, we may simply write $\rho(t,\tacka) = (\phi_t)_{\# }\rho_0$, where $\#$ denotes the push forward operator, defined by
\begin{equation*}
\int_\Omega f \, d (\phi)_\# \mu = \int_\Omega f\circ \phi\, d\mu,
\end{equation*}
for any Borel function $f$ and any Borel measure $\mu$. Thanks to the change of variables formula \eqref{15}, we thus have the identity  $\rho(t,\phi_t(\cdot))\, J\phi_t(\cdot) = \rho_0(\cdot)$.  Therewith, we can estimate
\[
\|\rho(t,\cdot)\|_{L^q}^q \stackrel{\eqref{15}}{=} \int_{\Omega}|\rho(t,\phi_t(x))|^q J\phi_t(x)\, dx \stackrel{\eqref{14}}{\le} \Lambda^{(q-1)} \|\rho_0\|_{L^q}^q,
\]
and thus
\begin{equation}
\label{e:Phi:Lq}
\|\rho\|_{L^{\infty}(L^q)} \le \Lambda^{1-\frac1q} \|\rho_0\|_{L^q}.
\end{equation}
The relation between the continuity equation \eqref{1} and the ordinary differential equation~\eqref{e:flow:sol} is reviewed in \cite{AmbrosioCrippa14}.

\section{Transport distance with logarithmic cost}\label{S:5}

In this section we review some properties of transport distances with  logarithmic cost functions. For a comprehensive introduction to the theory of optimal transportation, we refer to Villani's monograph~\cite{Villani03}.

Given two nonnegative distributions $\rho_1$ and $\rho_2$ of the same mass $\rho_1[\Omega] = \rho_2[\Omega]$,
a \emph{transport plan} or \emph{coupling} $\pi$ is a plan that determines how the distribution $\rho_1$ is transferred to the distribution $\rho_2$. These are characterized by the condition
\[
\pi[A\times \Omega] = \int_A\rho_1\, dx,\quad \pi[\Omega\times A] = \int_A\rho_2\, dx\qquad\mbox{for all measurable }A\subset \Omega .
\]
In this paper, we will rather use the equivalent characterization
\begin{equation}
\label{18}
\int \bra[\big]{\varphi(x) + \xi(y)} \, d\pi(x,y) = \int \varphi \rho_1\, dx + \int \xi \rho_2\, dx\qquad\text{for all }\varphi,\xi\in C(\overline\Omega).
\end{equation}
The set of all transport plans between $\rho_1$ and $\rho_2$ will be denoted by $\Pi(\rho_1,\rho_2)$.

The problem of optimal transportation is to minimize the total cost that is necessary for transferring configuration $\rho_1$ into configuration $\rho_2$. Here, we will always assume that costs are measured relative to the distance of shipment. Given a nonnegative cost function $c$ on $[0,\infty)$, this amounts to minimizing
the total transport cost
\[
\iint c(|x-y|)\, d\pi(x,y)
\]
among all admissible transport plans $\pi\in \Pi(\rho_1,\rho_2)$.

In most parts of this paper, we will consider logarithmic cost functions $c(z) = \log(z/r+1)$ with some positive parameter $r$, and we write
\[
\D_r(\rho_1,\rho_2) = \inf_{\pi\in \Pi(\rho_1,\rho_2)} \iint \log\left(\frac{|x-y|}r +1\right)\, d\pi(x,y).
\]
As any concave function, the logarithmic cost function induces a metric on $\Omega$ by setting $d(x,y) = c(|x-y|)$. This crucial insight has a number of important consequences that we gather in the following.

\begin{itemize}
\item The minimal total cost $\D_r$ constitutes a distance on the space of densities with equal mass on $\Omega$, cf.\ \cite[Theorem 7.3]{Villani03}. In particular, it obeys the triangle inequality for all densities $\rho_1,\rho_2,\rho_3 \in L^1(\Omega)$,
\begin{equation}
\label{19}
\D_r(\rho_1,\rho_2)\le \D_r(\rho_1,\rho_3) + \D_r(\rho_3,\rho_2).
\end{equation}
In the literature, such distances go by difference names, including ``Wasserstein distance'', ``Monge--Kantorovich distance'' or ``Kantorovich--Rubinstein distance'' depending on the context and the mathematical community. In this paper, we will choose the third of these options, as motivated by the following observation:
\item It holds the Kantorovich--Rubinstein dual representation for any $\rho_1,\rho_2\in L^1(\Omega)$
\[
\D_r(\rho_1,\rho_2) = \sup_\psi \set*{ \int \psi  \, d\bra{\rho_1 - \rho_2} : \abs{\psi(x)-\psi(y)} \leq \log\bra*{\frac{\abs{x-y}}{r} + 1}} ,
\]
cf.\ \cite[Theorem 1.14]{Villani03}. In particular, the transport distance between two densities~$\rho_1$ and~$\rho_2$ only depends on their difference $\rho_1-\rho_2$ and it holds the \emph{transshipment identity}
\begin{equation}
\label{20}
\D_r(\rho_1,\rho_2)  = \D_r((\rho_1-\rho_2)^+,(\rho_1 - \rho_2)^-).
\end{equation}
Moreover, the latter allows for extending the definition of the Kantorovich--Rubin\-stein distance to any two not necessarily nonnegative densities of same mass.
\item Kantorovich--Rubinstein distances defined for densities on a compact domain~$\Omega$ metrize weak convergence for measures, i.e.,
\[
\D_{r}(\rho_h,\rho) \to 0\quad \Longleftrightarrow\quad \int f \, d\rho_h \to \int f \, d\rho\text{ for all $f\in C(\overline\Omega)$ as } h\to 0,
\]
see \cite[Theorem 7.12]{Villani03} for more details.
\item The minimum in the primal and the maximum in the dual formulation are both attained, see, for instance, Exercise 2.36 and Theorem 2.45 in \cite{Villani03}. We will denote the optimal transport plan by $\pi_{\opt}$. A characterization of $\pi_{\opt}$ was given by Gangbo and McCann \cite{GangboMcCann96}, see also \cite[Theorem 2.45]{Villani03}.
\item It has been shown in \cite{Seis16a} that the mapping
$t\mapsto \D_r(\rho_1(t),\rho_2(t))$ is absolutely continuous with derivative
  \begin{equation}\label{e:Dist:deriv}
    \abs*{\frac{d}{dt} \D_r(\rho_1(t),\rho_2(t))} \leq \iint\frac{|u(x)-u(y)|}{|x-y|+r}\, d\pi_{\opt}(x,y),
  \end{equation}
if $\rho_1$ and $\rho_2$ are two integrable distributional solutions of the continuity equation $\partial_t \rho + \div\bra{u \, \rho} = 0$. In \eqref{e:Dist:deriv}, $\pi_{\opt} = \pi_{\opt}(t)$ is the optimal transport plan for $\D_r((\rho_1(t)-\rho_2(t))^+,(\rho_1(t)-\rho_2(t))^-)$.
\end{itemize}
We conclude this section with a particular coupling $\pi\in \Pi(\rho_1,\rho_2)$, which applies to the case where the densities can be written as the push forward of the same density under different flows, $\rho_1 = \psi_{\#}\rho$, $\rho_2 = \phi_{\#}\rho$.
More generally, let $(\psi(x))_{x\in \Omega}$ and $(\phi(x))_{x\in \Omega}$ be two families of random variables on the common standard probability space $\bra*{\Omega, \cF,\PP}$. Let $\EE$ denote the corresponding expectation and $\PP_x\bra*{\psi \in dy}:=\PP\bra*{\psi(x) \in dy}$. Then for any nonnegative $\rho$, we have $\bra*{\psi_\# \rho}(dy) = \int \PP_x\bra*{ \psi \in dy} \, \rho(dx)$ and it holds
\begin{equation}\label{e:standard_coupling}
  \D_r(\psi_\# \rho, \phi_\# \rho) \leq \int \EE_x\pra*{\log\bra*{\frac{|\psi - \phi|}{r} + 1}} \, \rho(dx) =: \EE_{\rho}\pra*{\log\bra*{\frac{|\psi - \phi|}{r} + 1}},
\end{equation}
In the following, we will refer to this coupling as the \emph{standard coupling}.
In particular, we can apply the standard coupling to the regular Lagrangian flow $\phi_t$ from Section~\ref{S:3}, which is then interpreted as a random variable with $\PP_x\bra{ \phi_t \in dy } = \delta_{\phi_t(x)}(dy)$.

\section{Properties of the Upwind scheme}\label{S:4}

In this section, we derive some intrinsic properties of the upwind finite volume scheme. Except noted otherwise, all these properties will be valid for any (unstructured) tessellation $\cT$, for which the regularity condition \eqref{e:trace} is active.

\subsection{Basis properties}

Let us start by discussing the relation between the two CFL conditions~\eqref{e:CFL_num} and~\eqref{CFL_true}. The following lemma shows, that~\eqref{CFL_true} is sufficient for~\eqref{e:CFL_num} provided the implicit constant is chosen small enough.
\begin{lemma}[Verification of CFL condition~\eqref{e:CFL_num}]\label{lem:CFL}
Suppose that the mesh $\cT$ satisfies~\eqref{e:trace} with constant $C_0$  and that $u\in L^1(L^\infty \cap W^{1,p})$. Then it holds for any $K\in \cT$ and all $n\in \llb 0,  N \rrb$ that
\begin{equation}\label{e:est:CFL}
  \sum_{L\sim K} p_{KL}^n = \delta t \, \sum_{L\sim K} \tau_{KL} u_{KL}^{n+} \leq C_0 \frac{\delta t}{h} u^n_\infty,
\end{equation}
where $u^n_\infty$ is defined in~\eqref{CFL_true}.
\end{lemma}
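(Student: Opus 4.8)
The plan is to bound $\sum_{L\sim K}\tau_{KL}u_{KL}^{n+}$ directly from the definitions by a trace estimate on each cell. First I would observe that $u_{KL}^{n+}\le |u_{KL}^n|\le \avint_{t^n}^{t^{n+1}}\avint_{K\edge L}|u\cdot\nu_{KL}|\,d\Ha^{d-1}\,dt\le \avint_{t^n}^{t^{n+1}}\avint_{K\edge L}|u|\,d\Ha^{d-1}\,dt$, using Jensen's inequality in the time and surface averages. Multiplying by $\tau_{KL}=|K\edge L|/|K|$ and summing over all neighbours $L\sim K$ collapses the surface averages into a single integral over $\partial K$, giving
\[
\sum_{L\sim K}\tau_{KL}u_{KL}^{n+}\le \frac{1}{|K|}\avint_{t^n}^{t^{n+1}}\int_{\partial K}|u|\,d\Ha^{d-1}\,dt=\frac{1}{|K|}\avint_{t^n}^{t^{n+1}}\|u(t,\cdot)\|_{L^1(\partial K)}\,dt.
\]

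Next I would apply the trace estimate \eqref{e:trace} to $f=u(t,\cdot)$ on the cell $K$ (componentwise, absorbing the dimensional constant into $C_0$), yielding $\|u(t,\cdot)\|_{L^1(\partial K)}\le C_0\big(\|\grad u(t,\cdot)\|_{L^1(K)}+h^{-1}\|u(t,\cdot)\|_{L^1(K)}\big)$. This gives a bound in terms of local Sobolev norms; however, the statement \eqref{e:est:CFL} asks only for $C_0\frac{\delta t}{h}u_\infty^n$, so I would instead apply \eqref{e:trace} more crudely, or simply bound $\|u(t,\cdot)\|_{L^1(\partial K)}\le |\partial K|\,\|u(t,\cdot)\|_{L^\infty}$ and invoke the isoperimetric consequence \eqref{e:isoper}, $|\partial K|/|K|\le C/h$. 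Either route produces $\sum_{L\sim K}\tau_{KL}u_{KL}^{n+}\le \frac{C_0}{h}\avint_{t^n}^{t^{n+1}}\|u(t,\cdot)\|_{L^\infty}\,dt=\frac{C_0}{h}u_\infty^n$. Multiplying by $\delta t$ gives \eqref{e:est:CFL}, and the first equality in the displayed chain is just the definition \eqref{e:def:p_KL} of $p_{KL}^n$ for $L\sim K$.

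There is no serious obstacle here; the only point requiring care is bookkeeping of constants — making sure the constant named $C_0$ in \eqref{e:trace} (or the $C$ in \eqref{e:isoper} derived from it) is the same constant that appears in \eqref{e:est:CFL}, and that the componentwise application of the scalar trace/isoperimetric inequality to the vector field $u$ does not introduce a dimension-dependent factor that must be tracked. I would also note explicitly that no boundary faces of $K$ lying on $\partial\Omega$ contribute, since $u\cdot\nu=0$ there by \eqref{e:ass:u}, so the sum over $L\sim K$ genuinely accounts for all of $\partial K$ that carries flux. The Sobolev regularity of $u$ enters only to guarantee that the trace $\|u(t,\cdot)\|_{L^1(\partial K)}$ is well-defined; the quantitative bound uses nothing beyond $L^\infty$ control in space and the mesh regularity.
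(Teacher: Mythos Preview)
Your proposal is correct and follows essentially the same route as the paper: bound the edge fluxes by $\|u\|_{L^\infty}$, collapse the sum over edges into $|\partial K|/|K|$, and invoke the isoperimetric consequence \eqref{e:isoper} of the trace inequality. The paper skips your intermediate step through $\|u(t,\cdot)\|_{L^1(\partial K)}$ and pulls out the $L^\infty$ norm directly, but the argument is the same; your remarks on the role of the Sobolev regularity (only to make traces meaningful) and on the boundary faces are accurate side observations.
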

\begin{proof}
By using the definition $\tau_{KL}=\abs{K\edge L}/\abs{K}$ and  the one of $u_{KL}^{n+}$ in \eqref{e:def:uKsigma}, we arrive at
\begin{align*}
  \delta t  \sum_{L\sim K} \tau_{KL} u_{KL}^{n+} = \frac{\delta t}{\abs{K}} \sum_{L\sim K} \bra*{ \avint_{t^n}^{t^{n+1}} \int_{K\edge L} u \cdot \nu_{KL} \,d\Ha^{d-1}\,dt }^+.
\end{align*}
Now, since $u\in L^1(L^\infty\cap W^{1,p})$, its time average $\avint_{t^n}^{t^{n+1}} u \, dt$ is in $L^\infty\cap W^{1,p}$ and in particular in $L^\infty\cap W^{1,1}$, since $\Omega$ is bounded. Hence, we can apply a trace estimate and obtain
\begin{align*}
  \delta t  \sum_{L\sim K} \tau_{KL} u_{KL}^{n+} \leq \delta t \ \avint_{t^{n}}^{t^{n+1}} \norm{u}_{L^\infty} \, dt \  \sum_{L\sim K} \frac{\abs{K\edge L}}{\abs{K}} = \delta t \, u^n_\infty \, \frac{\abs{\partial K}}{\abs{K}}  \leq C_0 \frac{\delta t}{h} u^n_\infty,
\end{align*}
where the last step follows by the regularity property \eqref{e:trace} (or~\eqref{e:isoper}).
\end{proof}

In the next lemma, we summarize classical properties of the upwind scheme. These are monotonicity, mass preservation and stability. Notice that for the derivation of these properties, the numerical CFL condition \eqref{e:CFL_num} is sufficient.

\begin{lemma}[Stability estimates]
\label{lem:prop_scheme}
The upwind finite volume scheme has the following properties:
  \begin{enumerate}[ (i) ]
   \item If $\rho^0_h\geq 0$, then $\rho_h^n\geq 0$ for all $n\in \llb 0, N \rrb$.
   \item For any $n\in \llb 0, N \rrb$ it holds
   \begin{equation}\label{e:mas:conserve}
     \int \rho_h^n\, dx = \int \rho_h^0\, dx.
   \end{equation}
   \item For any $q\geq 1$ it holds
   \begin{equation}\label{e:scheme:Linfty}
      \| \rho_h \|_{L^{\infty}(L^q)} \le  \exp^{1-\frac1q}\bra*{ \int_0^{T} \norm{ (\div u)^-}_{L^\infty} dt }\norm{\rho^0_h}_{L^q} = \Lambda^{1-\frac1q} \norm{\rho^0_h}_{L^q}.
   \end{equation}
  \end{enumerate}
\end{lemma}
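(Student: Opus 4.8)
The plan is to establish the three properties in order, each following from the representation \eqref{e:upwind2}--\eqref{e:def:p_KL} of the scheme together with the CFL condition \eqref{e:CFL_num}. For (i), I would argue by induction on $n$. The key observation is that under \eqref{e:CFL_num} the coefficients $p_{KL}^n$ are all nonnegative: the off-diagonal entries $\delta t\, \tau_{KL} u_{KL}^{n+}$ are manifestly $\ge 0$, and the diagonal entry $1 - \delta t \sum_{L\sim K}\tau_{KL} u_{KL}^{n+}$ is $\ge 0$ precisely by \eqref{e:CFL_num}. Hence if $\rho_L^n \ge 0$ for all $L$, then $\abs{K}\rho_K^{n+1} = \sum_{L} \abs{L}\rho_L^n p_{LK}^n \ge 0$ for every $K$, which closes the induction starting from $\rho_h^0 \ge 0$.

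For (ii), I would sum \eqref{e:upwind} against $\abs{K}$ over all $K\in\cT$ and observe that the flux terms cancel pairwise: the term $\delta t\, \abs{K}\tau_{KL} u_{KL}^{n+}\rho_K^n$ appearing in the equation for $K$ is matched, using the antisymmetry $u_{KL}^{n-} = u_{LK}^{n+}$ and the symmetry $\abs{K}\tau_{KL} = \abs{L}\tau_{LK}$, by the term $-\delta t\, \abs{L}\tau_{LK} u_{LK}^{n-}\rho_K^n$ in the equation for $L$. Therefore $\sum_K \abs{K}\rho_K^{n+1} = \sum_K \abs{K}\rho_K^n$, and iterating gives \eqref{e:mas:conserve}. (Equivalently, one can note from \eqref{e:def:p_KL} that $\sum_K \abs{K} p_{KL}^n$ need not be one, but $\sum_L p_{KL}^n$ accounts correctly once one tracks the divergence; the direct cancellation in \eqref{e:upwind} is cleanest.)

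For (iii), the heart of the matter is the $L^q$ bound, and here I would proceed in two steps. First, from \eqref{e:upwind2} write $\abs{K}\rho_K^{n+1} = \sum_L \abs{L}\rho_L^n p_{LK}^n$. The row sums of $(p_{LK}^n)_{K}$ out of a fixed $L$ equal $1$ by the definition \eqref{e:def:p_KL} (the diagonal term plus $\delta t\sum_{K\sim L}\tau_{LK}u_{LK}^+$), so by Jensen's inequality applied to the convex function $t\mapsto \abs{t}^q$ with these weights, $\abs*{\sum_L \abs{L}\rho_L^n p_{LK}^n \big/ \sum_L \abs{L} p_{LK}^n}^q \le \sum_L \abs{L}\rho_L^n p_{LK}^n \abs{\rho_L^n}^{q} \big/ \sum_L \abs{L}p_{LK}^n$ — more precisely I would write $\abs{K}\abs{\rho_K^{n+1}}^q \le \bra*{\sum_L \abs{L}p_{LK}^n}^{q-1}\sum_L \abs{L}p_{LK}^n\abs{\rho_L^n}^q$. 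Then I must control the column sum $c_K^n := \sum_L \abs{L}p_{LK}^n / \abs{K}$, which by \eqref{e:def:p_KL} equals $1 - \delta t\sum_{L\sim K}\tau_{KL}u_{KL}^{n+} + \delta t\sum_{L\sim K}\tau_{LK}\frac{\abs{L}}{\abs{K}}u_{LK}^{n+} = 1 + \delta t\sum_{L\sim K}\tau_{KL}(u_{LK}^{n+} - u_{KL}^{n+}) = 1 - \delta t\sum_{L\sim K}\tau_{KL}u_{KL}^n$; this last sum is a discrete divergence of $u$ on the cell $K$, and using $u_{KL}^n \ge -u_{KL}^{n-} \ge -\avint_{t^n}^{t^{n+1}}\norm{(\div u)^-}_{L^\infty}\,dt$-type bounds (via the divergence theorem on $K$) one gets $c_K^n \le 1 + \delta t\, \avint_{t^n}^{t^{n+1}}\norm{(\div u)^-}_{L^\infty}\,dt =: 1 + \delta t\, \theta^n \le e^{\delta t\,\theta^n}$. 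Combining, $\sum_K \abs{K}\abs{\rho_K^{n+1}}^q \le e^{(q-1)\delta t\,\theta^n}\sum_L \abs{L}\abs{\rho_L^n}^q$, and iterating over $n$ together with $\sum_n \delta t\,\theta^n \le \int_0^T \norm{(\div u)^-}_{L^\infty}\,dt$ gives $\norm{\rho_h^n}_{L^q}^q \le \Lambda^{q-1}\norm{\rho_h^0}_{L^q}^q$, which is \eqref{e:scheme:Linfty}; the case $q=\infty$ follows by passing to the limit or arguing directly with the maximum principle from nonnegativity of $p_{KL}^n$ and the column-sum bound.

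The main obstacle I anticipate is getting the column-sum estimate $c_K^n \le e^{\delta t\,\theta^n}$ with the correct constant: one has to recognize $\sum_{L\sim K}\tau_{KL}u_{KL}^n = \frac{1}{\abs{K}}\avint_{t^n}^{t^{n+1}}\int_{\partial K} u\cdot\nu\,d\Ha^{d-1}\,dt = \frac{1}{\abs{K}}\avint_{t^n}^{t^{n+1}}\int_K \div u\,dx\,dt$ as an exact cell-averaged divergence, and then bound its negative part by $\norm{(\div u)^-}_{L^\infty}$. The Jensen step and the telescoping in $n$ are routine once this is in place.
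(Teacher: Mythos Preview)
Your arguments for (i) and (ii) match the paper's essentially verbatim: nonnegativity of the $p_{KL}^n$ under the CFL condition gives monotonicity, and the row-sum identity $\sum_{K}p_{LK}^n=1$ (equivalently, the pairwise flux cancellation you describe) gives mass conservation.

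For (iii) you take a genuinely different route. The paper proves the endpoint cases $q=1$ and $q=\infty$ separately---the first by the triangle inequality in the mass-conservation identity, the second by a direct maximum principle using exactly the column-sum computation $c_K^n = 1 - \int_{t^n}^{t^{n+1}}\avint_K \div u\,dx\,dt \le 1 + \int_{t^n}^{t^{n+1}}\|(\div u)^-\|_{L^\infty}\,dt$ you identified---and then invokes the Riesz--Thorin interpolation theorem to obtain the intermediate~$L^q$ bounds. Your approach instead handles all $q\in[1,\infty)$ in one stroke via Jensen's inequality: writing $|K|\,|\rho_K^{n+1}|^q \le (c_K^n)^{q-1}\sum_L |L|\,p_{LK}^n\,|\rho_L^n|^q$, summing over $K$, and using both the uniform bound on $c_K^n$ and the row-sum identity $\sum_K p_{LK}^n=1$. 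This is correct and more elementary, since it avoids appealing to an interpolation theorem; the paper's route is slightly more structural, cleanly separating the $L^1$ contraction from the $L^\infty$ growth factor and reading the $L^q$ bound off as a black-box consequence. Both yield the same constant $\Lambda^{1-1/q}$.
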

\begin{proof}
  Let us first note, that under the CFL condition~\eqref{e:CFL_num} it holds
  \[
     1- \delta t \sum_{L\sim K} \tau_{KL} u_{KL}^{n+} \geq 0 ,
  \]
  and therefore $0\le p_{LK}^n\le 1$  for all $K,L\in \cT$.
  As a consequence, we deduce from \eqref{e:upwind2} that $\rho_K^{n+1}$ is defined as a conical combination of $\bra{\rho_L^n}_{L\in \cT}$, which in turn implies \emph{(i)}.

  Summation over $K\in\cT$ in the upwind scheme \eqref{e:upwind2} results in
  \[
    \sum_{K\in \cT} \abs{K} \rho_K^{n+1} = \sum_{K,L\in \cT} \abs{L} p_{LK}^n \rho_L^n = \sum_{L\in \cT} \abs{L} \rho_L^n \sum_{K\in\cT} p_{LK}^n ,
  \]
  which implies~\eqref{e:mas:conserve}, since $\sum_{K} p_{LK}^n=1$. This proves \emph{(ii)}. Moreover, by applying the modulus and the triangle inequality to the above identity, we obtain the case $q=1$ in the estimate of \emph{(iii)}.

  Now, we prove the other pivotal estimate for $q=\infty$. At first, we calculate
  \begin{align*}
    \frac{1}{\abs{K}} \sum_{L\in\cT} \abs{L} p_{LK}^n
    &= 1 - \delta t \sum_{L\sim K} \frac{\abs{K \edge L}}{| K | } \bra{ u_{KL}^{n+} - u_{KL}^{n-}} \\
    &= 1- \frac{1}{\abs{K}} \int_{t^n}^{t^{n+1}} \int_{\partial K } u \cdot \nu\,d\Ha^{d-1}dt\\
    & = 1- \int_{t^n}^{t^{n+1}} \avint_K \div u \,dx\,dt.
  \end{align*}
  Then, applying the absolute value and taking the maximum in \eqref{e:upwind2}, we find that
  \[
    \abs{\rho_K^{n+1}} \le\bra[\bigg]{\frac{1}{\abs{K}}\sum_{L\in\cT} \abs{L} p_{LK}^n} \max_{K\in\cT} \abs{\rho_K^n} \le \bra[\bigg]{1+\int_{t^n}^{t^{n+1}} \norm{\bra{\div u}^-}_{L^\infty(\Omega)} \, dt} \max_{K\in \cT} \abs{\rho_K^{n}}.
  \]
  After passing to the supremum in the spatial variable we thus have
  \[
    \max_{K\in\cT} \abs{\rho_K^{n+1}} \le \bra[\bigg]{1+\int_{t^n}^{t^{n+1}} \norm{\bra{\div u}^-}_{L^\infty(\Omega)} \, dt } \max_{K\in\cT} \abs{\rho_K^n}.
  \]
  The statement in \eqref{e:scheme:Linfty} with $q=\infty$ then follows by iteration thanks to the elementary inequality $\Pi_{n}( 1+a_n ) \le \exp\left(\sum_n a_n\right)$.

  Summarizing the previous two steps, we have found that the upwind scheme at time step~$n$ defines a bounded linear operator from $L^1$ to $L^1$ with norm bounded by $1$, and from $L^{\infty}$ to $L^{\infty}$ with norm bounded by $\exp\bra[\big]{\int_0^{t^n} \|(\div u)^-\|_{L^{\infty}}\, dt}$. The Riesz--Thorin interpolation theorem then yields the result for any $q\in[1,\infty]$.
\end{proof}

\subsection{Probabilistic interpretation}

We will see in Section \ref{S:6} below that it is enough to consider configurations that are \emph{nonnegative}. Note that this is consistent with the upwind scheme by the first property in Lemma \ref{lem:prop_scheme} above. We will thus assume from here on that $\rho_K^n\ge 0$ for all $n \in \llb 0,N\rrb$ and all $K\in\cT$.

Following the ideas of Delarue, Lagouti\`ere and Vauchelet in \cite{DelarueLagoutiere11,DelarueLagoutiereVauchelet16}, we associate random characteristics with the upwind scheme.
Therefore, we construct a Markov chain~$(J^n)_{n\in \N_0}$ with state space $\cT$.
We use $\cT^{\N_0}$ as the canonical space and $(J^n)_{n\in \N_0}$ is then the  canonical process.
The $\sigma$-field $\cA$ is generated by sets $\prod_{n\in \N} A^n$ with $A^n\subset \cT$ and $A^n=\cT$ for any sufficiently large value of $n$.
The canonical filtration is $\FF=\bra{\cF^n = \sigma(J^0,\dots, J^n)}_{n\in \N_0}$. We endow $(\cT^{\N_0},\cA)$ with a collection of probability measures $(\PP_K)_{K\in \cT}$. Here, the element $K\in \cT$ is the initial point for the process $(J^n)_{n\in\N_0}$, i.e., it holds $\PP_K(J^0 = L) = \delta_{K,L}$. Moreover, $\PP$ defines the Markov chain $(J^n)_{n\in \N_0}$ with transition matrix $(p_{KL}^n)_{K,L \in \cT}$ as defined in~\eqref{e:def:p_KL}.
We thus have the relation
\begin{equation*}
  \PP( J^{n+1} = L \mid \cF^n ) = p_{J^n L}^n.
\end{equation*}
For $\mu$, a nonnegative measure on $\cT$, we define $\PP_{\mu}$ by
\begin{equation*}
  \PP_{\mu}(\tacka) = \sum_{K\in \cT} \mu(K) \PP_K(\tacka).
\end{equation*}
This is a Markov chain starting from $\mu$, i.e., $\PP_\mu(J^0 = K) = \mu(K)$. We will denote by $J^n_\# \mu$ the law of the Markov chain at time $n$ started from $\mu$.

In the following lemma, $\rho^n$ denotes the solution of the upwind scheme at time $n\in\N_0$ considered as a vector indexed by the control volumes $K\in\cT$.

\begin{lemma}[Flow representation of approximate solution]
The solution of the upwind scheme is the pushforward of the discretized initial datum by the Markov chain, i.e., $\rho^n = J^n_\# \rho^0$ for all $n\in\N$.
\end{lemma}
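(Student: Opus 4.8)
The plan is a short induction on $n$, once the statement is read through the natural identification of a density vector with a measure on the mesh. First I would fix the convention: identify a density vector $\sigma=(\sigma_K)_{K\in\cT}$ with the discrete measure $\bar\sigma:=\sum_{K\in\cT}\abs{K}\,\sigma_K\,\delta_K$ on $\cT$, so that $\bar\sigma(\{K\})=\int_K\sigma\,dx$ is the mass contained in the cell $K$ (this is the discrete analogue of identifying a density $\rho$ with the measure $\rho\,dx$, as in $\rho(t)=(\phi_t)_\#\rho_0$). With this reading, the assertion $\rho^n=J^n_\#\rho^0$ means $\overline{\rho^n}=J^n_\#\overline{\rho^0}$, and it is this identity that I would prove. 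The key observation is that the upwind scheme written in the form \eqref{e:upwind2}, namely $\abs{K}\rho_K^{n+1}=\sum_{L\in\cT}\abs{L}\rho_L^n\,p_{LK}^n$, is precisely the one-step forward (master) equation for the Markov chain with transition matrix $(p_{KL}^n)$ from \eqref{e:def:p_KL}, provided the unknowns are read as cell masses rather than cell averages.

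For the base case $n=0$, I would observe that $\PP_K(J^0=L)=\delta_{KL}$ forces $J^0_\#\mu=\mu$ for every nonnegative measure $\mu$ on $\cT$, so in particular $J^0_\#\overline{\rho^0}=\overline{\rho^0}$. For the inductive step, assume $\overline{\rho^n}=J^n_\#\overline{\rho^0}$. Using the definition of $\PP_\mu$ as the mixture $\sum_K\mu(K)\PP_K$ together with the Markov property and the transition rule $\PP(J^{n+1}=K\mid\cF^n)=p^n_{J^n K}$, the law of $J^{n+1}$ is obtained from the law of $J^n$ by one application of the transition matrix:
\[
\bigl(J^{n+1}_\#\overline{\rho^0}\bigr)(\{K\})
=\sum_{L\in\cT}\bigl(J^n_\#\overline{\rho^0}\bigr)(\{L\})\,p^n_{LK}
=\sum_{L\in\cT}\abs{L}\,\rho_L^n\,p^n_{LK},
\]
the last equality being the induction hypothesis. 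By \eqref{e:upwind2} the right-hand side equals $\abs{K}\rho_K^{n+1}=\overline{\rho^{n+1}}(\{K\})$, hence $\overline{\rho^{n+1}}=J^{n+1}_\#\overline{\rho^0}$ and the induction closes.

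There is essentially no hard step: the whole content is recognizing \eqref{e:upwind2} as the one-step forward equation for the chain. The only point that deserves a little care is the bookkeeping of the volume weights $\abs{K}$ in the identification $\sigma\leftrightarrow\bar\sigma$ — without them \eqref{e:upwind2} would not be a genuine mass-preserving stochastic evolution, since the cells of an unstructured mesh generally have different volumes — together with the (elementary) verification that the law at time $n$ evolves from the law at time $n-1$ by a single transition step, which is immediate from the Markov property and the definition of $\PP_\mu$.
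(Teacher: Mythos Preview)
Your proposal is correct and follows essentially the same approach as the paper: the paper's proof is a one-line reference to \cite[Lemma~3.6]{DelarueLagoutiereVauchelet16} together with the observation that the scheme in the form \eqref{e:upwind2} is precisely the forward (Kolmogorov) equation for the law of the Markov chain $(J^n)$. Your induction merely spells this out in full, including the careful identification of densities with cell-mass measures; there is no substantive difference.
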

\begin{proof}
  The proof is identical to~\cite[Lemma 3.6]{DelarueLagoutiereVauchelet16}. Just recall that~\eqref{e:upwind} can be rewritten in the form \eqref{e:upwind2}, which gives the evolution of the law of the Markov chain~$J^n$.
\end{proof}

We now define random characteristics $(\psi^n)_{n\in \N_0}$ as a Markov chain with state space $\Omega\subset \R^d$.
We use the canonical space $\Omega^{\N_0}$ with $\sigma$-algebra $\cB$ generated by sets $\prod_{n\in\N_0} B^n$, where $\bra{B^n}_{n\in\N_0}$ is the family of Borel sets in $\R^d$ such that $B^n= \R^d$ for any sufficiently large $n$ . We will use two filtrations. Firstly, the canonical filtration $\FF_{\R^d} = \bra[\big]{\cF^n_{\R^d} = \sigma(\psi^0,\dots,\psi^n)}_{n\in \N_0}$ and secondly the just defined coarse filtration $\FF=\bra{\cF^n = \sigma(J^0,\dots, J^n)}_{n\in \N_0}$, where $J^k \in \cT$ is the corresponding Borel measurable cell $J^k\subset \R^d$.

Then, we endow $(\Omega^{\N_0}, \cB)$ with a family of probability measures $\bra{\PP_z}_{z\in \Omega}$ generating a continuous-state Markov chain started in $z$, whose jump kernel at $x\in K$ is given by
\begin{equation}\label{e:JumpKernel}
  q^n(x,dy) := \begin{cases}
                 p^n_{KL} \, \frac{dy}{\abs{L}} &\mbox{for } dy\in L \sim K, \\
                 p^n_{KK} \, \delta_x(dy) &\mbox{for } dy\in K ,
               \end{cases}
\end{equation}
where $p^n_{KL}$ is defined in~\eqref{e:def:p_KL}.
Hence, it holds
\begin{equation}\label{e:def:Psi}
  \PP_z(\psi^{n+1} \in dy \mid \cF^n_{\R^d}) = q^n(\psi^n, dy) .
\end{equation}
For any $\FF_{\R^d}$-measurable nonnegative $\mu$, we set $\PP_\mu(\tacka) = \int \PP_z(\tacka) \,\mu(dz)$. In the following, we will exclusively start the Markov chain from $\FF$-measurable densities like the discretized initial data $\rho_h^0$ as defined in \eqref{e:disc_initial}.

Let us collect some properties of the random characteristics and point out the close links between the Markov chains $\bra{J^n}_{n\in\N_0}$ and $\bra{\psi^n}_{n\in\N_0}$.

\begin{lemma}\label{lem:ave:Psi}
Suppose that $\rho_h^0$ is nonnegative. Then the following holds:
  \begin{enumerate}[ (i) ]
   \item The distribution $\psi^n_\# \rho^0_h$ is constant on each cell $K\in \cT$. Moreover, if $\rho_{h}^n$ is given by~\eqref{e:rho_h} then $\rho_{h}^n = \psi^n_\# \rho^0_h$. In particular, it holds
\begin{equation}\label{e:Psi:J}
  \PP_{\rho^0_h}\bra{ \psi^n \in \, dx \mid J^n } = \frac{\chi_{J^n}(x)\, dx}{\abs{J^n}}.
\end{equation}
  Hence $\psi^n$ is uniformly distributed over any control volume $K\in\cT$, and thus $\PP_{\rho^0_h}(\psi^n\in K) = \PP_{\rho^0_h}(J^n = K )$ .
  \item If $\EE_{\rho^0_h}$ denotes the expectation under the law $\PP_{\rho^0_h}$, then
   \begin{equation}\label{e:EE:Psi}
     \EE_{\rho^0_h}[ \psi^{n+1}- \psi^{n} \mid \psi^n ] = \delta t \, u^n_h(\psi^n) ,
   \end{equation}
   where the net flow $u^n_h(x)$ for $x\in K$ is defined by
   \begin{equation}\label{e:def:netflow}
     u^n_h(x) := u^n_K := \sum_{L\sim K} p_{KL}^n \frac{x_L - x_K}{\delta t} \qquad\text{with}\qquad x_K := \avint_K x \, dx .
   \end{equation}
   If $K$ is a control volume of a Cartesian tessellation, then $u_K^n$ is related to the net outflow over the edges by the identity
   \begin{equation}\label{e:netflow:cubes}
      u^n_K  = \sum_{L\sim K} \nu_{KL} u_{KL}^{n+} .
   \end{equation}
   \item The relation
   \begin{equation*}
     \xi^n := \psi^{n+1} - \psi^n  - \EE\pra{ \psi^{n+1} - \psi^n \mid \cF^n }
   \end{equation*}
   defines a family of $\cF^{n+1}_{\R^d}$ measurable random variables $\bra*{\xi^n\in \R^d}_{n\in \N_0}$
   satisfying
   \begin{equation}\label{e:est:xi}
     \EE[\xi^n \mid \cF^{n}] = 0 \qquad\text{and}\qquad \abs{\xi^n} \leq 4h \quad\text{a.s.}
   \end{equation}
   Moreover, for any $m\geq 1$, there exists a constant $C>0$ only depending on $m$ such that
   \begin{equation}\label{e:est2:xi}
     \EE\pra[\big]{\abs{\xi^n}^m \mid \cF^{n}} \leq C \, \delta t \, u^n_\infty \, h^{m-1},
   \end{equation}
   where $u^n_\infty$ is defined in~\eqref{CFL_true}.
  \end{enumerate}
\end{lemma}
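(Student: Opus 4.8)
The plan is to prove the three assertions in order, since each builds on the previous. For \emph{(i)}, I would argue by induction on $n$. The base case $n=0$ is immediate from \eqref{e:disc_initial}, since $\rho_h^0$ is cell-wise constant. For the inductive step, I would use the definition of the jump kernel \eqref{e:JumpKernel}: if $\psi^n_\#\rho_h^0$ is constant on each cell, then integrating the kernel against this distribution distributes the mass uniformly over each target cell, so $\psi^{n+1}_\#\rho_h^0$ is again cell-wise constant. The key computation is to check that the cell-averages evolve exactly according to \eqref{e:upwind2}: writing $a_K^n$ for the value of $\psi^n_\#\rho_h^0$ on $K$, one finds $\abs{K} a_K^{n+1} = \sum_{L} \abs{L} a_L^n p_{LK}^n$, which is precisely the upwind recursion, so by the flow representation lemma $a_K^n = \rho_K^n$. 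The conditional law formula \eqref{e:Psi:J} then follows because, conditioned on $\{J^n = K\}$, the definition of the kernel forces $\psi^n$ to be uniform on $K$; and $\PP_{\rho_h^0}(\psi^n\in K)=\PP_{\rho_h^0}(J^n=K)$ is a consequence of matching the two Markov chains' transition structures (both use $p_{KL}^n$).

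For \emph{(ii)}, I would compute $\EE_{\rho_h^0}[\psi^{n+1}-\psi^n\mid\psi^n]$ directly from \eqref{e:def:Psi}. Conditioned on $\psi^n = x\in K$, the chain jumps to a point uniformly distributed on $L$ with probability $p_{KL}^n$ (and stays at $x$ with probability $p_{KK}^n$), so
\[
\EE[\psi^{n+1}\mid\psi^n=x] = p_{KK}^n x + \sum_{L\sim K} p_{KL}^n x_L,
\]
and subtracting $x = (1-\sum_{L\sim K}p_{KL}^n)x + \sum_{L\sim K}p_{KL}^n x$ yields $\sum_{L\sim K}p_{KL}^n(x_L - x)$. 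This is not yet \eqref{e:def:netflow}, which has $x_L - x_K$; but since this is to be read as an identity conditioned on $\psi^n$ being \emph{uniform} on $K$ (which is the relevant law by part (i)), averaging $x$ over $K$ replaces $x$ by $x_K$, giving \eqref{e:EE:Psi} with $u_h^n$ as in \eqref{e:def:netflow}. For the Cartesian identity \eqref{e:netflow:cubes}, I would insert $p_{KL}^n = \delta t\,\tau_{KL}u_{KL}^{n+}$ and use that for an axis-parallel box, $x_L - x_K = h_i \nu_{KL}$ where $h_i$ is the edge length in the relevant direction and $\tau_{KL} = \abs{K\edge L}/\abs{K} = 1/h_i$, so $\tau_{KL}(x_L - x_K) = \nu_{KL}$; summing gives the claim. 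This is the step where the Cartesian assumption is genuinely used.

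For \emph{(iii)}, the martingale-difference property $\EE[\xi^n\mid\cF^n]=0$ is immediate from the definition of $\xi^n$ by the tower property, noting $\psi^n$ is $\cF^n$-measurable (via $J^n$ and part (i) — here I would be slightly careful that conditioning on the coarse filtration $\cF^n$ rather than $\cF^n_{\R^d}$ is what is meant, and that \eqref{e:EE:Psi} together with \eqref{e:Psi:J} makes the two compatible). The almost-sure bound $\abs{\xi^n}\le 4h$ follows since both $\psi^{n+1}-\psi^n$ and its conditional expectation are bounded by the maximal jump distance: a jump goes from a point in $K$ to a point in a neighbor $L$, and $\diam(K\cup L)\le 2h$, so $\abs{\psi^{n+1}-\psi^n}\le 2h$ and hence $\abs{\xi^n}\le 4h$. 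The moment estimate \eqref{e:est2:xi} is the quantitative heart of the lemma: I would bound $\EE[\abs{\xi^n}^m\mid\cF^n]\le 2^{m-1}(\EE[\abs{\psi^{n+1}-\psi^n}^m\mid\cF^n] + \abs{\EE[\psi^{n+1}-\psi^n\mid\cF^n]}^m)$, control the second term by $(2h)^{m-1}$ times the first (using the a.s.\ bound), and then estimate $\EE[\abs{\psi^{n+1}-\psi^n}^m\mid\psi^n=x]\le (2h)^{m-1}\sum_{L\sim K}p_{KL}^n\cdot 2h$, bounding $\sum_{L\sim K}p_{KL}^n = \delta t\sum_{L\sim K}\tau_{KL}u_{KL}^{n+} \le C_0\,\delta t\,u_\infty^n/h$ by Lemma \ref{lem:CFL}. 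This produces $\EE[\abs{\xi^n}^m\mid\cF^n]\le C\,h^{m-1}\cdot\delta t\,u_\infty^n$, as claimed. The main obstacle I anticipate is not any single estimate but the careful bookkeeping of \emph{which} filtration and \emph{which} starting law one conditions against throughout — getting part (i) exactly right (the distinction between $x$ and $x_K$, between $\cF^n$ and $\cF^n_{\R^d}$) is what makes parts (ii) and (iii) clean, so I would invest the most care there.
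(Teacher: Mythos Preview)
Your proposal is correct and follows essentially the same route as the paper: induction on $n$ for \emph{(i)} showing the cell-wise constants satisfy the upwind recursion \eqref{e:upwind2}; a direct computation from the kernel \eqref{e:JumpKernel} for \emph{(ii)} combined with the Cartesian identity $\tau_{KL}(x_L-x_K)=\nu_{KL}$; and for \emph{(iii)} the a.s.\ jump bound $|\psi^{n+1}-\psi^n|\le 2h$ together with $\sum_{L\sim K}p_{KL}^n\le C\,\delta t\,u_\infty^n/h$ from Lemma~\ref{lem:CFL}. The only organizational difference is that in \emph{(iii)} the paper computes $\EE[\varphi(\xi^n)\mid\cF^n]$ via the kernel with $\varphi(x)=|x|^m$ directly, whereas you split $|\xi^n|^m$ by the triangle inequality first --- both yield the same bound. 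Your caution about the distinction between conditioning on $\psi^n$ versus on $\cF^n$ (equivalently $J^n$) is well placed: the paper's proof in fact computes $\EE[\,\cdot\mid\psi^n\in K]$ using \eqref{e:Psi:J}, which is exactly the averaging step you describe.
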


Formula \eqref{e:netflow:cubes} is the only place in this paper in which the assumption that $\cT$ is a Cartesian triangulation will be used.  We will further comment on this on page \pageref{mesh_discussion} below.
\begin{proof}
First, we verify that $\psi^n_\#\rho_h^0 $ is constant on each cell $K$ and that the constant satisfies the same recursion as the solution of the upwind scheme in~\eqref{e:upwind2}. We note that $\rho^0_h$ by definition~\eqref{e:disc_initial} is absolutely continuous with respect to the Lebesgue measure and by construction of the jump kernel~\eqref{e:JumpKernel} so is $\psi^{n}_\# \rho_h^0$ for any $n\geq 1$. Therefore, we let $\varphi$ be a continuous test function on $\overline\Omega$ and calculate
\begin{align*}
  \int \varphi(x) \, \psi^{n+1}_{\#} \rho^{0}_h(dx)
  &\stackrel{\mathclap{\eqref{e:def:Psi}}}{=} \iint \varphi(x) \, q^n(y,dx) \,  \psi^{n}_{\#} \rho^{0}_h(dy) \\
  &= \sum_{L\in\cT} \int_L \bra*{\sum_{K\sim L} p_{LK}^n \avint_K \varphi(x) dx  + \varphi(y) p_{LL}^n } \psi^{n}_{\#}\rho^{0}_h(dy).
\end{align*}
To show, that $\psi^{n+1}_{\#}\rho^0_h$ is constant on any $K\in \cT$, we argue by induction. Since $\psi^0_{\#} \rho^{0}_h = \rho^0_h$ the base case is settled. Now we assume that $\psi^{n}_{\#} \rho^{0}_h$ is constant on every $K\in \cT$ and  we denote these constants by $c_K^n$. Then we obtain
\[
  \int \varphi(x) \, \psi^{n+1}_{\#} \rho^{0}_h(dx) =  \sum_{L\in\cT} \sum_{K\in\cT}  \abs{L} c_L^n \, p_{LK}^n \; \avint_K \varphi(x) \, dx  .
\]
Now, if $\varphi$ is such that $\avint_K \varphi(x) dx=0$ for all $K\in \cT$, the right-hand side vanishes. Hence, also $\psi^{n+1}_{\#} \rho^{0}_h(dx)$ is constant on any $K\in \cT$.

As a consequence, by choosing $\varphi$ as the characteristic function for some fixed $ K\in \cT$ (which can be done by approximation), we infer the identity
\[
  |K| c_K^{n+1} = \sum_{L\in\cT} |L| c_L^n \, p_{LK}^n .
\]
Comparing this formula with \eqref{e:upwind2} and recalling that $c_K^0  = \rho_K^0$, the uniqueness of the explicit scheme yields that $c_K^n = \rho_K^n$ for any $n$. Hence $\rho_h^{n+1} = \psi^{n+1}_\#\rho_h^0$.

For the second property, we calculate for $K\in \cT$ fixed:
  \begin{align*}
     \EE[ \psi^{n+1}- \psi^{n} \mid \psi^n \in K ]
     &\ \underset{\mathclap{\eqref{e:Psi:J}}}{\overset{\mathclap{\eqref{e:def:Psi}}}{=}}\  \frac{1}{\abs{K}} \int_K \int (y-x) \, q^n(x,dy) \, dx \\
     &\ \overset{\mathclap{\eqref{e:JumpKernel}}}{=}\ \sum_{L\sim K} p_{KL}^n \ \avint_K\avint_L \bra{y - x } \, dy\,dx \\
     &\ =\  \sum_{L\sim K} p_{KL}^n ( x_L - x_K ) = \delta t \, u_K^n.
  \end{align*}
  In case of uniform rectangular meshes, we have for all $L\in \cT$ the identity $\abs{ K \edge L } \, \abs{x_L - x_K} = \abs{K}$ and can further rewrite
  \begin{align*}
    u_K^n &\stackrel{{\eqref{e:def:p_KL}}}{=}  \sum_{L\sim K} \frac{\abs{K \edge L} \, \bra{x_L - x_K}}{\abs{K}} u_{KL}^{n+}   = \sum_{L\sim K} \nu_{KL} u_{KL}^{n+} .
\end{align*}

We now turn to the proof of \emph{(iii)}. The measurability and mean-zero property of~$\xi^n$ follow immediately from the latter's definition. In view of \eqref{e:EE:Psi} and \eqref{e:def:netflow}, the norm of $\xi^n$ can be estimated by
\[
  \abs{\xi^n} \leq \abs{\psi^{n+1} - \psi^n} + \delta t \, \abs{u^n_{J^n}} \leq 2h + 2 h \sum_{L\sim K} p_{KL}^n \leq 4 h .
\]
For the further characterization of $\xi^n$, we calculate using a test function $\varphi$ on $\R^d$,
\begin{align*}
  \EE\pra{ \varphi(\xi^n) \mid \cF^n } &\ \underset{\mathclap{\eqref{e:Psi:J}}}{\overset{\mathclap{\eqref{e:def:Psi}}}{=}}\  \avint_{J^n} \int \varphi\bra*{y-  x -\delta t\, u_{J^n}^n}\, q^n(x ,dy) \, dx \\
  &\ \overset{\mathclap{\eqref{e:JumpKernel}}}{=}\ p_{J^n J^n}^n \, \varphi(-\delta t \, u_{J^n}^n) +  \sum_{L\sim J^n} p_{J^n L}^n \; \avint_{J^n} \avint_L \varphi\bra*{y- x - \delta t \, u_{J^n}^n} \, dy \, dx .
\end{align*}
Choosing $\varphi(x) = \abs{x}^m$ as a test function, we obtain the estimate (in which we suppress the $m$-depending constant)
\begin{align*}
  \EE\pra[\big]{ \abs{\xi}^m \mid \cF^n } &\lesssim  h^m\sum_{L\sim J^n} p_{J^n L}^n + \abs{\delta t\, u_{J^n}^n}^m \lesssim h^m \sum_{L\sim J^n} p_{J^n L}^n ,
\end{align*}
where we used the estimate
\[
  \abs{\delta t\, u_{J^n}^n}^m \overset{\eqref{e:def:netflow}}{\lesssim} \bra[\bigg]{h \sum_{L\sim J^n} p_{J^n L}^n}^m \leq h^m \sum_{L\sim J^n} p_{J^n L}^n  .
\]
The conclusion now follows from~\eqref{e:est:CFL}.
\end{proof}

From part \emph{(iii)} of the above lemma it follows that the random characteristics satisfy the discrete difference equation
\begin{equation}\label{e:Psi:difference}
  \psi^{n+1}-\psi^n = \delta t \, u_h^n(\psi^n) + \xi^n ,
\end{equation}
which is a time and space discretized variant of the stochastic differential equation~\eqref{intro:SDE}. We therefore expect that the martingale part $\sum_{\ell=0}^n \xi^\ell$ behaves like a rescaled random walk of scale $h$. This conjecture will be confirmed by the following lemma, in which an $h^{1/2}$ bound on the martingale part is established.

\begin{lemma}[Martingale estimate]\label{L8}
Suppose that $\rho_h^0$ is nonnegative. 
  For any $1\leq r < \infty$ there exists a positive constant $C$ such that 
   \begin{equation}\label{e:est:quad:xi}
     \EE_{\rho^0_h}\pra*{\sup_{0\leq k < N } \abs[\bigg]{\sum_{n=0}^k \xi^n}^r}^{\frac{1}{r}} \leq C \bra*{\sqrt{h \norm{u}_{L^1(L^\infty)}} +  h} \norm{\rho^0_h}_{L^1}^\frac{1}{r} .
   \end{equation}
\end{lemma}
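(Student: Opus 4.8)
The plan is to realize $\sum_n \xi^n$ as a genuine $\FF$-martingale plus an error of size at most $2h$, and then to apply classical martingale moment inequalities together with the deterministic control of the quadratic variation coming from the CFL condition. First, by \eqref{e:Psi:difference} the partial sums telescope,
\[
S_k := \sum_{n=0}^{k}\xi^n = \psi^{k+1} - \psi^0 - \delta t \sum_{n=0}^{k} u_h^n(\psi^n) = \psi^{k+1} - \psi^0 - \delta t \sum_{n=0}^{k} u_{J^n}^n .
\]
The ``within-cell'' randomness of $\psi^{k+1}$ prevents $S_k$ from being $\cF^{k+1}$-measurable, so I would not work with $S_k$ directly; instead set $\hat S_k := x_{J^{k+1}} - x_{J^0} - \delta t\sum_{n=0}^{k} u_{J^n}^n = \sum_{n=0}^{k}\hat\xi^n$ with $\hat\xi^n := x_{J^{n+1}} - x_{J^n} - \delta t\, u_{J^n}^n$. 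From the definition \eqref{e:def:netflow} of the net flow together with the transition probabilities \eqref{e:def:p_KL} one has $\EE[x_{J^{n+1}} - x_{J^n}\mid \cF^n] = \sum_{L\sim J^n} p_{J^n L}^n (x_L - x_{J^n}) = \delta t\, u_{J^n}^n$, so $\EE[\hat\xi^n\mid\cF^n]=0$ and $(\hat S_k)_k$ \emph{is} an $\FF$-martingale, now genuinely adapted. Moreover $|S_k - \hat S_k| \le |\psi^{k+1} - x_{J^{k+1}}| + |\psi^0 - x_{J^0}| \le 2h$ pointwise, since each point lies in its own cell of diameter at most $h$; hence $\sup_{k}|S_k|\le \sup_k|\hat S_k| + 2h$, and this second term already produces the additive $h$ in \eqref{e:est:quad:xi}.

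For the martingale $\hat S_k$ I would record, writing $K=J^n$ and using $|x_L-x_K|\lesssim h$ for $L\sim K$ and $\delta t\,|u_K^n| = |\sum_{L\sim K}p_{KL}^n(x_L-x_K)|\lesssim h\sum_{L\sim K}p_{KL}^n$, that $|\hat\xi^n|\lesssim h$ a.s.\ and, by the same computation as in the proof of Lemma~\ref{lem:ave:Psi}\,(iii), the conditional moment bound $\EE[|\hat\xi^n|^m\mid\cF^n]\lesssim_m h^m\sum_{L\sim K}p_{KL}^n\lesssim_m h^{m-1}\,\delta t\,u_\infty^n$ for every $m\ge1$, the last step being \eqref{e:est:CFL}. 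The point is that these bounds sum well: since $\delta t\,u_\infty^n=\int_{t^n}^{t^{n+1}}\|u\|_{L^\infty}\,dt$, one has $\sum_{n=0}^{N-1}\delta t\,u_\infty^n\le\|u\|_{L^1(L^\infty)}$, so the predictable quadratic variation is bounded \emph{deterministically}, $\sum_n\EE[|\hat\xi^n|^2\mid\cF^n]\lesssim h\|u\|_{L^1(L^\infty)}$, and likewise $\sum_n\EE[|\hat\xi^n|^{2m}\mid\cF^n]\lesssim_m h^{2m-1}\|u\|_{L^1(L^\infty)}$.

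For the maximal estimate I would first take even exponents $r=2m$ and apply the Burkholder--Rosenthal inequality to $\hat S_k$ under $\PP_{\rho_h^0}$, whose total mass is $\|\rho_h^0\|_{L^1}$:
\[
\EE_{\rho_h^0}\Big[\sup_{k<N}|\hat S_k|^{2m}\Big]\lesssim_m \EE_{\rho_h^0}\Big[\Big(\textstyle\sum_n\EE[|\hat\xi^n|^2\mid\cF^n]\Big)^m\Big] + \EE_{\rho_h^0}\Big[\textstyle\sum_n|\hat\xi^n|^{2m}\Big].
\]
Inserting the bounds above, the right-hand side is $\le C_m\big((h\|u\|_{L^1(L^\infty)})^m + h^{2m-1}\|u\|_{L^1(L^\infty)}\big)\|\rho_h^0\|_{L^1}$; taking $2m$-th roots and using the elementary inequality $h^{1-\frac1{2m}}\|u\|_{L^1(L^\infty)}^{\frac1{2m}}\le\sqrt{h\|u\|_{L^1(L^\infty)}}+h$ yields \eqref{e:est:quad:xi} for $\hat S$, hence for $S$ after adding the $2h$ from the first step. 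For a general $1\le r<\infty$ one picks $m$ with $2m\ge r$ and applies Jensen's inequality under the probability measure $\rho_h^0/\|\rho_h^0\|_{L^1}$; this upgrades the $L^r$ norm to the $L^{2m}$ norm and converts the weight $\|\rho_h^0\|_{L^1}^{1/(2m)}$ into $\|\rho_h^0\|_{L^1}^{1/r}$, which is exactly the claimed form.

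The main obstacle is the first step. The quantity the lemma asks us to control is not literally a martingale, because each increment $\xi^n$ carries the fresh uniform-in-cell randomness of $\psi^{n+1}$; recognizing that $\sum_n\xi^n$ coincides with the honest $\FF$-martingale $\hat S_k$ up to a pointwise $O(h)$ quantity is the crucial move, and it is simultaneously the source of the additive $h$ in the bound. Everything after that is a routine combination of the Burkholder inequalities with the CFL-driven deterministic control of the quadratic variation.
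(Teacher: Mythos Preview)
Your argument is correct and differs from the paper's in two ways. The paper works with $M_k=\sum_{\ell<k}\xi^\ell$ directly, declares it an $\FF$-martingale, applies Burkholder--Davis--Gundy for $r\le2$, and for $r=2s>2$ uses Doob's maximal inequality together with an explicit induction on $s$: it expands $|M_k+\xi^k|^{2s}$ multinomially and propagates the bound $\EE[|M_k|^{2s}]\lesssim\big(h(\delta t\,U_\infty^k+h)\big)^s$ from one step to the next, with intermediate exponents handled by Jensen. You instead observe that $S_k$ is not $\cF^{k+1}$-measurable---each $\xi^n$ depends on $\psi^{n+1}$ and not only on $J^{n+1}$---and replace it by the barycentre process $\hat S_k$, a genuine $\FF$-martingale differing from $S_k$ by at most $2h$ pointwise; then you invoke Burkholder--Rosenthal for even exponents and interpolate by Jensen. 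Your measurability observation is well taken: in fact $\EE[\xi^k\mid\cF_{\R^d}^k]=(x_{J^k}-\psi^k)\sum_{L\sim J^k}p_{J^kL}^k\neq0$, so the paper's martingale claim is not literally correct with respect to either filtration, though the discrepancy has size $O(\delta t\,u_\infty^k)$ and can be absorbed without changing the final bound. Your route is shorter because Burkholder--Rosenthal packages the paper's hand-made induction into a single off-the-shelf inequality; the paper's route is more elementary in that it avoids that named estimate. Both arguments rest on the same two inputs---the conditional moment control $\EE[|\hat\xi^n|^m\mid\cF^n]\lesssim h^{m-1}\delta t\,u_\infty^n$ and the summability $\sum_n\delta t\,u_\infty^n\le\|u\|_{L^1(L^\infty)}$---and in both the additive $h$ in \eqref{e:est:quad:xi} reflects the within-cell fluctuation of order $h$ (in yours through $|S_k-\hat S_k|\le2h$, in the paper's through the $h^{s-1}$ term in the recursion).
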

Our proof of Lemma \ref{L8} is (in parts) similar to the one of~\cite[Lemma 4.5]{DelarueLagoutiereVauchelet16}.
\begin{proof}
For the proof, we note that we can convert $\rho_h^0$ into a probability measure by normalizing  $\rho^0_h / \norm{\rho^0_h}_{L^1}$. Hence, we take the expectation  with respect to an initial $\FF$-measurable probability distribution, which we omit in the following part of the proof. Let $M_{k} := \sum_{\ell=0}^{k-1} \xi^{\ell}$ with $M_0 := 0$ and $M^*_k := \sup_{1\leq \ell \leq k} \abs{M_\ell}$. Then by \textit{(iii)} of Lemma~\ref{lem:ave:Psi}, it holds
\[
  \EE\pra[\big]{ M_k \mid \cF^{k-1} } = M_{k-1} + \underbrace{\EE\pra[\big]{\xi^{k-1} \mid \cF^{k-1}}}_{=0} = M_{k-1} ,
\]
and thus, $M_k$ is a discrete mean-zero martingale. This observation turns out to be crucial for the remainder of this proof as it enables us to apply two well-known martingale maximal inequalities to $(M_k)$. One of these is the Burkholder--Davis--Gundy inequality~\cite[Proposition 15.7]{Kallenberg2002}, which for any $r\in[1,\infty)$  takes the form
\[
 \EE\pra[\big]{ \bra{M_k^*}^r}  \leq C_r \, \EE\pra[\Big]{ \pra{M}_k^{\frac{r}{2}}} .
\]
Here, $\pra{M}_k$ is the quadratic variation of $\bra{M_k}$ given by
\begin{align*}
  \pra{M}_k &= \sum_{n=1}^{k} \abs{M_n - M_{n-1}}^2 = \sum_{n=0}^{k-1} \abs{\xi^{n}}^2,
\end{align*}
and $C_r$ is a constant dependent only on $r$. Note, that we have $\EE\pra[\big]{ \pra{M}_k^{\frac{r}{2}}}^{\frac{1}{r}}\leq \EE\pra[\big]{ \pra{M}_k}^\frac{1}{2}$ for any $r\in [1,2]$ thanks to Jensen's inequality. For these values of $r$, it is thus enough to consider the expectation of the quadratic variation, which by linearity and the law of total expectation becomes
\begin{align*}
  \EE\pra[\big]{\pra{M}_k} = \sum_{n=0}^{k-1} \EE\pra[\Big]{ \EE\pra*{ \abs*{\xi^{n}}^2 \,\middle|\, \cF^{k} }} \ \overset{\mathclap{\eqref{e:est2:xi}}}{\leq}\ C h \, \delta t \sum_{n=0}^{k-1} u^{n}_\infty.
\end{align*}
This estimate gives the conclusion in the case $r\in[1,2]$.

We furthermore notice that in expectation, the quadratic variation is equal to the $L^2$ norm of $M_n$ as a consequence of the orthogonality of $\bra{\xi^n}$, cf.\ \eqref{e:est:xi}, and the law of total expectation. The previous estimate thus implies
\begin{equation}\label{e:est:MnL2}
  \EE\pra[\big]{|M_k|^2} = \EE\pra[\big]{\pra{M}_k}\leq C h \, \delta t \sum_{n=0}^{k-1} u^{n}_\infty .
\end{equation}

In the case $r\ge 2$ we will use the second of the aforementioned martingale estimates,  namely Doob's inequality \cite[Proposition 6.16]{Kallenberg2002}: It holds for all $r>1$
\[
  \EE\pra[\big]{ \bra{M_k^*}^r }^\frac{1}{r} \leq \frac{r}{r-1} \EE\pra[\big]{ |M_k|^r}^\frac{1}{r}.
\]
For the remaining statement of the lemma, it is thus enough to estimate the $L^r$ norm of the martingale $M_k$ for all $r>2$. We will furthermore restrict our study to even values of $r$, that is, we assume that $r=2s$ for some $s\in\N$. The general case then follows by Jensen's inequality as for $r\in [1,2)$. We argue by induction over $s$. The induction base is settled for the case $s=1$, which we just have proven in~\eqref{e:est:MnL2}. Hence, we assume the induction hypothesis
\begin{equation}\label{e:est:MnIH}
     \EE\pra[\big]{\abs{M_k}^{r}} \leq C_r \bra*{ h \bra[\bigg]{\delta t \sum_{n=0}^{k-1} u^{n}_\infty  + h }}^{\frac{r}{2}}
\end{equation}
to hold for any $r\in \R$ with  $2 \leq r \leq 2 s-2$.
To begin the induction step, we calculate
\begin{align*}
  \abs{ M_{k+1}}^{2s} =& \abs{M_k + \xi^k}^{2s} = \bra*{ \abs{M_k}^2 + 2\skp{M_k , \xi^k} + \abs{\xi^k}^2}^s \\
  = &\abs{M_k}^{2s} +s\underbrace{|M_k|^{2(s-1)} \bra*{2\skp{M_k , \xi^k} + \abs{\xi^k}^2}}_{=:I}\\
  & + \sum_{\ell=2}^s \binom{s}{\ell} \underbrace{|M_k|^{2(s-\ell)} \bra*{  2\skp{M_k , \xi^k} + \abs{\xi^k}^2}^{\ell}}_{=:\textit{II}_\ell}.
\end{align*}
In the estimation of the term $I$, it is crucial that $|M_k|^{2(s-1)}M_k$ and $\xi^k$ are orthogonal in expectation. Indeed, thanks to \eqref{e:est:xi} and the law of total expectation, it holds that $\EE\pra*{|M_k|^{2(s-1)} \skp{M_k,\xi^k}} = \EE\pra*{|M_k|^{2(s-1)} \, \EE\pra{\skp{M_k,\xi^k}\mid\cF^k}}=0$. Similarly, with the help of~\eqref{e:est2:xi}, we derive $\EE\pra{|M_k|^{2(s-1)}|\xi^k|^2}\lesssim h \, \delta t\,  u_{\infty}^k \EE\pra{|M_k|^{2(s-1)}}$. We have thus shown that
\[
  \EE[I] \leq C h \, \delta t \, u^k_\infty \EE\pra*{ |M_k|^{2(s-1)} }  .
\]
Using essentially the same arguments as in the last estimate, we can get control over the terms $\text{\it II}_{\ell}$. Indeed,  with the help of~\eqref{e:est2:xi}, since $\ell\geq 2$, we have
\begin{align*}
  \EE[\textit{II}_{\ell}] &\leq C \EE\pra[\Big]{ |M_k|^{2s-{\ell}}  \abs{\xi^{k}}^{\ell} } + \EE\pra[\Big]{ |M_k|^{2(s-{\ell})}  \abs{\xi^{k}}^{2\ell} }\\
  &\leq C  \delta t \,  u^{k}_\infty h^{\ell-1}  \EE\pra*{ |M_k|^{2s-\ell} }   +\delta t \, u^{k}_\infty h^{2\ell-1} \EE\pra*{|M_k|^{2(s-\ell)}}   .
\end{align*}
It will be convenient to define $U^{k}_\infty =   \sum_{n=0}^{k-1} u_\infty^{n}$.
Combining these two estimates 
and applying the induction hypothesis~\eqref{e:est:MnIH}, we then obtain
\begin{align*}
  \EE\pra*{\abs{ M_{k+1}}^{2s}} &= \EE\pra[\bigg]{\abs{ M_{k}}^{2s}+ s I + \sum_{\ell=2}^s \binom{s}{\ell} \textit{II}_\ell } \\
  &\leq \EE\pra*{\abs{ M_{k}}^{2s}} + C\, \delta t\, u^k_\infty \sum_{\ell=2}^{2s} \bra*{h \bra{\delta t\, U^{k}_\infty + h } }^\frac{2s-\ell}{2} h^{\ell-1} \\
  &\leq \EE\pra*{\abs{ M_{k}}^{2s}} + C\, \delta t\, u^k_\infty \, h^s \bra[\bigg]{\sum_{\ell=2}^{2s}  \bra{\delta t \, U^{k}_\infty}^{\frac{2s-\ell}{2}} h^{\frac{\ell -2}{2}} + h^{s - 1} } \\
  &\leq \EE\pra*{\abs{ M_{k}}^{2s}} + C\, \delta t\, u^k_\infty \, h^s \bra*{ \bra{\delta t \, U^{k}_\infty}^{s-1} + h^{s-1} },
\end{align*}
where we have used the fact that $\sum_{\ell=2}^{2s}  a^{\frac{2s-\ell}{2}}b^{\frac{\ell-2}{2}} = \sum_{\ell=0}^{2s-2} \sqrt{a}^{2s-2-\ell} \sqrt{b}^{\ell} \leq C (a^{s-1}+b^{s-1})$ in the last inequality. Iterating this estimate and using $M_0=0$, we obtain for any $k\in\N$
that
\[
\EE\pra*{|M_{k}|^{2s}}\le C \sum_{n=0}^{k-1} \delta t\, u_{\infty}^n \, h^s\left(\left(\delta t \, U_{\infty}^n\right)^{s-1}+h^{s-1}\right) \le C h^s\left(\delta t\, U_{\infty}^{k} + h\right)^s.
\]
This proves the statement in  \eqref{e:est:MnIH} for $r=2s$. For intermediate values $r=2s- \sigma$ with $\sigma\in (0,2)$, we estimate by using Jensen's inequality
again
\[
  \EE\pra[\big]{ \abs{M_n}^{r}}^{\frac{1}{r}} = \EE\pra*{ \bra[\big]{\abs{M}_n^{2s}}^\frac{2s-\sigma}{2s}}^{\frac{1}{2s-\sigma}}\leq \EE\pra[\bigg]{ \abs{M}_n^{2s}}^\frac{1}{2s} .
\]
This concludes the proof.
\end{proof}

\section{Proof of the error estimates}\label{S:6}

\subsection{Proof of Theorem \ref{T1}}

We start with the observation that it is enough to consider the case of \emph{nonnegative} (approximate) solutions.
Indeed, in view of the superposition principle  for the continuity equation, that is, $\rho(t) = (\phi_t)_{\#}\rho_0$, it is clear that solutions are nonnegative if the data are. In particular, if the functions $\rho_{\pm}$ denote the solutions corresponding to the initial data $\rho_0^\pm\geq 0$, then $\rho_{\pm}\ge 0$. Moreover, by the uniqueness of the Cauchy problem, it holds that $\rho = \rho_+ -\rho_-$ is the unique solution with initial data $\rho_0 = \rho_0^+ - \rho_0^-$. With regard to Lemma \ref{lem:prop_scheme}, the same holds true in the discrete setting: The solution to the upwind scheme can be split into $\rho_{h} = (\rho_{h})_+ - (\rho_{h})_-$ where $(\rho_{h})_{\pm}$ is the nonnegative discrete solution with initial data $(\rho^{\pm}_0)_h$ (first decomposed then discretized). Now, by the transshipment property \eqref{20} and the triangle inequality \eqref{19} of the Kantorovich--Rubinstein distance, we estimate
\[
\D_r(\rho,\rho_{h}) \le \D_r(\rho_+,(\rho_{h})_+) + \D_r(\rho_-,(\rho_{h})_-).
\]
To prove Theorem \ref{T1}, it is therefore enough to control the distances between the nonnegative densities on the right.

We keep $t\in[t^n,t^{n+1})$ fixed, and split the Kantorovich--Rubinstein distance between $\rho(t) = (\phi_t)_{\#}\rho_0$ and $\rho_h(t) = \psi^n_{\#}\rho_h^0$ according to
\begin{align}
\MoveEqLeft{\D_r(\rho(t),\rho_h(t))} \notag\\
&\le \D_r((\phi_t)_{\#}\rho_0, (\phi_{t^n})_{\#}\rho_0)  + \D_r((\phi_{t^n})_{\#}\rho_0, (\phi_{t^n})_{\#}\rho_h^0)  + \D_r((\phi_{t^n})_{\#}\rho_h^0, \psi^n_{\#}\rho_h^0). \label{17}
\end{align}
Here we have used the triangle inequality for $\D_r$, cf.\ \eqref{19}. The first term in \eqref{17} measures the error caused by the {\em discretization in time}, the second one measures the error due to the {\em discretization of the initial datum} and the third term quantifies the {\em error of the upwind scheme}. The estimates of the first two terms are contained in the following two lemmas.
\begin{lemma}[Error due to time discretization]\label{L1}
There exists a constant $C$ such that
\[
\D_r((\phi_t)_{\#}\rho_0, (\phi_{t^n})_{\#}\rho_0) \le \log\left(\frac{C h}{r}+1\right) \|\rho_0\|_{L^1} .
\]
\end{lemma}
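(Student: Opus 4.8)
The plan is to use the \emph{standard coupling} from Section~\ref{S:5} between the two push-forwards $(\phi_t)_\#\rho_0$ and $(\phi_{t^n})_\#\rho_0$, since both are images of the \emph{same} density $\rho_0$ under (deterministic) flow maps. Concretely, regarding $\phi_t$ and $\phi_{t^n}$ as random variables with $\PP_x(\phi_t\in dy)=\delta_{\phi_t(x)}(dy)$, inequality~\eqref{e:standard_coupling} gives
\[
\D_r\bigl((\phi_t)_\#\rho_0,(\phi_{t^n})_\#\rho_0\bigr)\le \int_\Omega \log\!\left(\frac{|\phi_t(x)-\phi_{t^n}(x)|}{r}+1\right)\rho_0(dx).
\]
So everything reduces to a pointwise (in $x$) bound on the displacement $|\phi_t(x)-\phi_{t^n}(x)|$ along a single path line over the short time interval $[t^n,t]\subset[t^n,t^{n+1})$.

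First I would estimate this displacement using the integral form of the regular Lagrangian flow, $\phi_t(x)-\phi_{t^n}(x)=\int_{t^n}^t u(s,\phi_s(x))\,ds$, so that $|\phi_t(x)-\phi_{t^n}(x)|\le \int_{t^n}^{t^{n+1}}\|u(s,\cdot)\|_{L^\infty}\,ds=\delta t\,u^n_\infty$ with $u^n_\infty$ as in~\eqref{CFL_true}. Then I would invoke the CFL condition~\eqref{CFL_true}, which says precisely $\delta t\,u^n_\infty\le C h$, to conclude $|\phi_t(x)-\phi_{t^n}(x)|\le C h$ for a.e.\ $x$ and for every $t\in[t^n,t^{n+1})$. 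Since $z\mapsto\log(z/r+1)$ is increasing, this yields $\log\!\bigl(|\phi_t(x)-\phi_{t^n}(x)|/r+1\bigr)\le\log(Ch/r+1)$ pointwise.

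Plugging this uniform bound back into the standard-coupling inequality and integrating against $\rho_0$ (using $\rho_0\ge 0$, which holds by the reduction to nonnegative data at the start of Section~\ref{S:6}), I get
\[
\D_r\bigl((\phi_t)_\#\rho_0,(\phi_{t^n})_\#\rho_0\bigr)\le \log\!\left(\frac{Ch}{r}+1\right)\int_\Omega\rho_0\,dx=\log\!\left(\frac{Ch}{r}+1\right)\|\rho_0\|_{L^1},
\]
which is the claim. The only genuinely substantive point is recognizing that the CFL condition~\eqref{CFL_true} is engineered exactly so that the maximal path-line length per time step is $O(h)$ (as already remarked after~\eqref{CFL_true}); there is no real obstacle here, just the bookkeeping of applying the standard coupling and the elementary flow estimate. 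One mild technical care needed is well-definedness of the flow $\phi_s(x)$ for a.e.\ $x$ on $[t^n,t]$ and measurability of $x\mapsto|\phi_t(x)-\phi_{t^n}(x)|$, both of which are guaranteed by the DiPerna--Lions theory recalled in Section~\ref{S:3}.
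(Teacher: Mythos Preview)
Your proposal is correct and follows essentially the same approach as the paper: apply the standard coupling~\eqref{e:standard_coupling} to reduce to a pointwise displacement bound, estimate $|\phi_t(x)-\phi_{t^n}(x)|\le \int_{t^n}^{t^{n+1}}|u(s,\phi_s(x))|\,ds\le \delta t\,u_\infty^n\le Ch$ via the CFL condition~\eqref{CFL_true}, and then integrate against $\rho_0$. The paper's proof is identical in structure and content, only slightly more terse.
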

\begin{lemma}[Error due to discretization of initial data]\label{L2}
There exists a constant $C$ such that
\[
  \D_r((\phi_t)_{\#}\rho_0, (\phi_{t})_{\#}\rho_h^0) \le C\, \log\left(\frac{h}{r}+1\right) \bra*{ \| \rho_0\|_{L^1} +  \Lambda^\frac1p \, \|\rho_0\|_{L^q}\, \| u\|_{L^1(W^{1,p})}} .
\]
\end{lemma}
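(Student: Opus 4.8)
Since the reduction to nonnegative $\rho_0$ has already been made, the plan is to combine the stability estimate \eqref{e:Dist:deriv} for the continuous equation with an ``in-cell'' coupling of $\rho_0$ and $\rho_h^0$. The starting point is that, by the flow representation of Section~\ref{S:3}, both $(\phi_t)_\#\rho_0$ and $(\phi_t)_\#\rho_h^0$ are the (integrable, distributional) solutions of the continuity equation \eqref{1} with velocity $u$ and initial data $\rho_0,\rho_h^0\in L^1(\Omega)$, respectively. Hence \eqref{e:Dist:deriv} applies to this pair and, dropping $r$ from the denominator,
\[
\abs*{\frac{d}{dt}\D_r\bra*{(\phi_t)_\#\rho_0,(\phi_t)_\#\rho_h^0}}\le\iint\frac{\abs{u(t,x)-u(t,y)}}{\abs{x-y}+r}\,d\pi_{\opt}(t)\le\iint\frac{\abs{u(t,x)-u(t,y)}}{\abs{x-y}}\,d\pi_{\opt}(t),
\]
so it suffices to bound the initial distance $\D_r(\rho_0,\rho_h^0)$ and the time integral of the last quantity.

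For the initial distance I would use the coupling $\pi_0\in\Pi(\rho_0,\rho_h^0)$ that, cell by cell, redistributes $\rho_0$ to its mean: on each $K\in\cT$ put $d\pi_0(x,y)=\rho_0(x)\,\frac{\chi_K(y)}{\abs K}\,dx\,dy$. Since $\rho_0\ge0$ and $\int_K\rho_0=\abs K\rho_K^0$ by \eqref{e:disc_initial}, this has the right marginals, and it is supported in $\bigcup_{K}(K\times K)$, so $\abs{x-y}\le\diam K\le h$ on its support; consequently $\D_r(\rho_0,\rho_h^0)\le\log\bra*{\frac hr+1}\norm{\rho_0}_{L^1}$. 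For the time-integral term the key tool is the classical pointwise estimate for Sobolev functions, $\abs{f(x)-f(y)}\le C\abs{x-y}\bra*{\mathcal M\abs{\grad f}(x)+\mathcal M\abs{\grad f}(y)}$ with $\mathcal M$ the Hardy--Littlewood maximal operator, applied to a bounded $W^{1,p}$-extension of $u(t,\cdot)$ to $\R^d$ (available since $\Omega$ is polyhedral, and responsible for the full norm $\norm{u}_{W^{1,p}}$ rather than just $\norm{\grad u}_{L^p}$; for convex $\Omega$ no extension is needed). Since the marginals of $\pi_{\opt}(t)$ are dominated by $(\phi_t)_\#\rho_0$ and $(\phi_t)_\#\rho_h^0$, Hölder's inequality together with the $L^p$-boundedness of $\mathcal M$ (this is the only place $p>1$ enters), the transport estimate \eqref{e:Phi:Lq}, and $\norm{\rho_h^0}_{L^q}\le\norm{\rho_0}_{L^q}$ (Jensen) gives
\[
\iint\frac{\abs{u(t,x)-u(t,y)}}{\abs{x-y}}\,d\pi_{\opt}(t)\le C\norm{\grad u(t)}_{L^p}\bra*{\norm{(\phi_t)_\#\rho_0}_{L^q}+\norm{(\phi_t)_\#\rho_h^0}_{L^q}}\le C\,\Lambda^{\frac1p}\norm{\rho_0}_{L^q}\norm{u(t)}_{W^{1,p}}.
\]
Integrating over $[0,t]$ and adding the two contributions produces the bound $\log(\tfrac hr+1)\norm{\rho_0}_{L^1}+C\Lambda^{1/p}\norm{\rho_0}_{L^q}\norm{u}_{L^1(W^{1,p})}$, i.e.\ the asserted estimate.

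The main obstacle is the last displayed inequality: controlling the $\pi$-averaged difference quotient of $u$ uniformly in the mesh, which is the Eulerian counterpart of the Crippa--De~Lellis argument and is precisely where $p>1$ is unavoidable (on $L^1$ the maximal function fails to be bounded). The only other points requiring care are that $(\phi_t)_\#\rho_h^0$ genuinely solves \eqref{1} so that \eqref{e:Dist:deriv} is legitimate for the pair, and that---because both densities are nonnegative---the optimal plan appearing in \eqref{e:Dist:deriv} has marginals bounded above by $(\phi_t)_\#\rho_0$ and $(\phi_t)_\#\rho_h^0$, which is what lets the estimate be closed in terms of $L^q$ norms of the solutions.
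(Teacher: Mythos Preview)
Your proposal is correct and follows essentially the same route as the paper: apply \eqref{e:Dist:deriv} to the pair $(\phi_t)_\#\rho_0,(\phi_t)_\#\rho_h^0$, control the resulting difference quotient via the Morrey-type maximal function bound \eqref{5} on a $W^{1,p}$-extension of $u$ together with the marginal conditions and H\"older, and estimate the initial distance by a cell-by-cell coupling. The only cosmetic differences are that the paper phrases the marginal step via $|(\phi_t)_\#(\rho_0-\rho_h^0)|$ (using the transshipment identity) rather than bounding each marginal separately by $(\phi_t)_\#\rho_0$ and $(\phi_t)_\#\rho_h^0$, and it leaves the in-cell plan $\pi_K$ unspecified where you write it out explicitly; both yield the same bound $\log(h/r+1)\|\rho_0\|_{L^1}+C\Lambda^{1/p}\|\rho_0\|_{L^q}\|u\|_{L^1(W^{1,p})}$, which is (slightly stronger than but) sufficient for the stated lemma.
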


Here and in the following, $\|u\|_{L^1(W^{1,p})}$ can be replaced by the homogeneous part $\|\grad u\|_{L^1(L^p)}$ in situations where $\Omega$ is a convex domain.

To estimate the third term in \eqref{17}, we first estimate the transportation distance with the help of the standard coupling \eqref{e:standard_coupling},
\[
\D_r((\phi_{t^n})_{\#}\rho_h^0, \psi^n_{\#}\rho_h^0) \le \EE_{\rho^0_h}\pra*{\log\left(\frac{|\phi_{t^n}(x) - \psi^n|}r +1\right)} .
\]
Using the evolution laws in \eqref{e:flow:sol} and \eqref{e:Psi:difference} together with the concavity of the logarithm, we obtain
\begin{align*}
\D_r((\phi_{t^n})_{\#}\rho_h^0, \psi^n_{\#}\rho_h^0)
&\le \EE_{\rho^0_h}\pra*{\log\bra[\bigg]{\frac{|\phi_{t^{n-1}}(x) - \psi^{n-1} - \xi^{n-1}|}r +1}} \\
&\quad + \EE_{\rho^0_h}\pra*{\frac{\abs[\big]{\int_{t^{n-1}}^{t^n} u(s,\phi_s(x))\, ds - \delta t \, u_h^{n-1}(\psi^{n-1})}}{|\phi_{t^{n-1}}(x) - \psi^{n-1} - \xi^{n-1}|+r}} .
\end{align*}
This procedure can be repeated. After $n-1$ iterations, we have the estimate
\begin{align}
\D_r((\phi_{t^n})_{\#}\rho_h^0, \psi^n_{\#}\rho_h^0)
&\le \EE_{\rho^0_h}\pra*{\log\bra[\bigg]{\frac1r \abs[\bigg]{\sum_{\ell=0}^{n-1} \xi^{\ell}} + 1}} \label{21}\\
&\quad + \sum_{\ell=0}^{n-1} \EE_{\rho^0_h}\pra*{\frac{\abs[\big]{\int_{t^{\ell}}^{t^{\ell+1}} u(s,\phi_s(x))\, ds - \delta t \, u_h^{\ell}(\psi^{\ell})}}{|\phi_{t^{\ell}}(x) - \psi^{\ell} - \sum_{k=\ell}^{n-1}\xi^{k}|+r}} .\notag
\end{align}
Let us denote the first term by $T_0$ and note that the second term on the right-hand side of the previous estimate is furthermore controlled by the sum $T_1 + T_2 + T_3$, where
\begin{align}
T_1 &:=\sum_{\ell=0}^{n-1}\frac1r\int_{t^{\ell}}^{t^{\ell+1}} \int |u(s,\phi_s(x)) - u(s,\phi_{t^{\ell}}(x))|\,\rho^0_h(dx)\, ds\label{22}\\
T_2 &:= \sum_{\ell=0}^{n-1} \int_{t^{\ell}}^{t^{\ell+1}} \EE_{\rho^0_h}\pra*{\frac{\abs*{  u(s,\phi_{t^{\ell}}(x)) - u(s,\psi^{\ell})}}{|\phi_{t^{\ell}}(x) - \psi^{\ell} - \sum_{k=\ell}^{n-1}\xi^k| +r}}\, ds  \label{23}\\
T_3 &:= \sum_{\ell=0}^{n-1} \frac1r \EE_{\rho^0_h}\pra*{\abs[\bigg]{\int_{t^{\ell}}^{t^{\ell+1}} u(s,\psi^{\ell})\, ds -\delta t \, u_h^{\ell}(\psi^{\ell})}}.\label{24}
\end{align}
We thus have to estimate the terms in \eqref{21}--\eqref{24}. This is the content of the following lemmas.

The term in~\eqref{21} is caused by the numerical diffusion introduced by the upwind scheme, which is manifested as a sum of centered random variables. Our proof of \eqref{21} consists of an application of martingale estimate from Lemma~\ref{L8}.
\begin{lemma}[Estimate of $T_0$ \eqref{21}]\label{L4}
There exists a constant $C$
such that
\[
\EE_{\rho^0_h}\pra*{\log\bra[\bigg]{\frac1r \abs[\bigg]{\sum_{\ell=0}^{n-1} \xi^{\ell}} +1}} \le C \frac1r\left(\sqrt{h \norm{u}_{L^1(L^\infty)}} + h\right) \, \|\rho_0\|_{L^1}  .
\]
\end{lemma}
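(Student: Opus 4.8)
The plan is to exploit the sublinearity of the logarithm to reduce the claim to the martingale bound of Lemma~\ref{L8}. Since $\log(1+z)\le z$ for every $z\ge 0$, applying this pointwise with $z = \frac1r\abs{\sum_{\ell=0}^{n-1}\xi^\ell}$ and taking the expectation would give
\[
\EE_{\rho^0_h}\pra*{\log\bra[\bigg]{\frac1r\abs[\bigg]{\sum_{\ell=0}^{n-1}\xi^\ell}+1}}\le \frac1r\,\EE_{\rho^0_h}\pra[\bigg]{\abs[\bigg]{\sum_{\ell=0}^{n-1}\xi^\ell}}\le\frac1r\,\EE_{\rho^0_h}\pra[\bigg]{\sup_{0\le k<N}\abs[\bigg]{\sum_{m=0}^{k}\xi^m}}.
\]
Then I would invoke Lemma~\ref{L8} with exponent $1$ (that is, $r=1$ in the statement of that lemma, which is not to be confused with the transport parameter $r$ appearing here in $\D_r$), which bounds the last expectation by $C\bra*{\sqrt{h\norm{u}_{L^1(L^\infty)}}+h}\norm{\rho^0_h}_{L^1}$.

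It then remains to replace $\norm{\rho^0_h}_{L^1}$ by $\norm{\rho_0}_{L^1}$. Since we are in the nonnegative setting and $\rho^0_K=\avint_K\rho_0\,dx$, we have $\norm{\rho^0_h}_{L^1}=\sum_{K}\abs{K}\rho^0_K=\sum_K\int_K\rho_0\,dx=\norm{\rho_0}_{L^1}$; in general (without the sign assumption) Jensen's inequality applied cell by cell gives $\norm{\rho^0_h}_{L^1}\le\norm{\rho_0}_{L^1}$, which is also the $q=1$ case of the stability estimate recalled in Lemma~\ref{lem:prop_scheme}\,(iii) at the initial step. Combining the two displays yields the claimed bound.

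There is essentially no serious obstacle in this argument: Lemma~\ref{L4} is a direct corollary of Lemma~\ref{L8}, and the only point requiring care is the notational collision between the exponent $r$ in Lemma~\ref{L8} and the transport parameter $r$ in $\D_r$. One could alternatively route the estimate through the second-moment bound \eqref{e:est:MnL2} together with the Cauchy--Schwarz inequality, but this would only produce a weaker constant and is unnecessary, since Lemma~\ref{L8} is already stated for every $1\le r<\infty$, in particular for $r=1$.
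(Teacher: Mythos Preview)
Your proposal is correct and follows essentially the same approach as the paper: use the sublinearity $\log(1+z)\le z$, then apply the martingale estimate of Lemma~\ref{L8} with exponent $1$, and finally invoke $\|\rho^0_h\|_{L^1}=\|\rho_0\|_{L^1}$. The intermediate step where you bound by the supremum over $k$ is harmless but unnecessary, and your remark on the notational clash of the two $r$'s is a helpful clarification.
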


The next term~\eqref{22} involves a time-shift and we use again the CFL condition~\eqref{CFL_true}. In addition, in this estimate, we use the maximal function of the gradient of~$u$ to bound the difference.
\begin{lemma}[Estimate of $T_1$ \eqref{22}]\label{L5}
There exists a constant $C $ such that
\[
\sum_{\ell=0}^{n-1}\frac1r\int_{t^{\ell}}^{t^{\ell+1}}\int |u(s,\phi_s(x)) - u(s,\phi_{t^{\ell}}(x))|\,\rho^0_h(dx) \, ds \le C\, \frac{h}r \, \Lambda^\frac1p \, \|\rho_0\|_{L^q} \, \| u\|_{L^1(W^{1,p})} .
\]
\end{lemma}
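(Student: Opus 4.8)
The plan is to estimate the spatial displacement $|\phi_s(x) - \phi_{t^\ell}(x)|$ along a single time step by the CFL-controlled quantity $\int_{t^\ell}^{t^{\ell+1}} \|u(\sigma,\cdot)\|_{L^\infty}\, d\sigma$, and then bound the velocity increment $|u(s,\phi_s(x)) - u(s,\phi_{t^\ell}(x))|$ using the standard Sobolev difference quotient estimate in terms of the maximal function of $\grad u$. First I would recall that, since $u \in L^1(L^\infty)$, for $s \in [t^\ell, t^{\ell+1}]$ the flow satisfies $|\phi_s(x) - \phi_{t^\ell}(x)| \le \int_{t^\ell}^{t^{\ell+1}} \|u(\sigma,\cdot)\|_{L^\infty}\, d\sigma =: \delta t\, u_\infty^\ell$, which by the CFL condition \eqref{CFL_true} is bounded by $Ch$. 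In particular each path line moves by at most order $h$ in a single step.

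Next I would invoke the well-known pointwise inequality for $W^{1,p}$ functions (valid for $p>1$, and on a convex domain, or after an extension for general $\Omega$, which is why the $W^{1,p}$ norm rather than just $\|\grad u\|_{L^p}$ appears): for a.e.\ $a,b \in \Omega$,
\[
|u(s,a) - u(s,b)| \le C\, |a-b|\,\bigl( M\grad u(s,a) + M\grad u(s,b)\bigr),
\]
where $M$ denotes the Hardy--Littlewood maximal operator. Applying this with $a = \phi_s(x)$, $b = \phi_{t^\ell}(x)$ and using the displacement bound from the previous step, the integrand in \eqref{22} is controlled by
\[
C\, \frac{\delta t\, u_\infty^\ell}{r}\, \bigl(M\grad u(s,\phi_s(x)) + M\grad u(s,\phi_{t^\ell}(x))\bigr).
\]
Then I would integrate in $x$ against $\rho_h^0\,dx$: using that $\phi_s$ and $\phi_{t^\ell}$ are regular Lagrangian flows with compressibility constant $\Lambda$, a change of variables gives $\int M\grad u(s,\phi_\cdot(x))\, \rho_h^0(dx) \le \Lambda^{1/p}\,\|M\grad u(s,\cdot)\|_{L^p}\,\|\rho_h^0\|_{L^q}$ by Hölder's inequality, and then $\|M\grad u(s,\cdot)\|_{L^p} \lesssim \|\grad u(s,\cdot)\|_{L^p}$ by the maximal function theorem (here $p>1$ is essential). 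This yields a bound by $C\,\frac{\delta t\, u_\infty^\ell}{r}\,\Lambda^{1/p}\,\|\rho_0\|_{L^q}\int_{t^\ell}^{t^{\ell+1}}\|\grad u(s,\cdot)\|_{L^p}\,ds$, after also using $\|\rho_h^0\|_{L^q} \le \|\rho_0\|_{L^q}$ from Jensen. Summing over $\ell = 0,\dots,n-1$ and using the CFL bound $\delta t\, u_\infty^\ell \le Ch$ together with $\sum_\ell \int_{t^\ell}^{t^{\ell+1}}\|\grad u\|_{L^p}\,ds \le \|u\|_{L^1(W^{1,p})}$ gives the claimed estimate.

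The main obstacle is the careful bookkeeping around the maximal-function inequality: one must make sure the arguments $\phi_s(x)$ and $\phi_{t^\ell}(x)$ stay in a region where the pointwise Sobolev estimate applies (this is the role of the $W^{1,p}$ extension versus $\|\grad u\|_{L^p}$ on convex $\Omega$), and one must correctly track the two separate changes of variables (one for $M\grad u(s,\phi_s(x))$, one for $M\grad u(s,\phi_{t^\ell}(x))$), each of which costs a factor $\Lambda^{1/p}$ rather than $\Lambda$ because only the $L^p$ norm is transported while the $L^q$ mass stays on $\rho_h^0$. Everything else — the flow displacement bound, Hölder, the maximal theorem, and the final telescoping sum — is routine.
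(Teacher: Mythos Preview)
Your proposal is correct and follows essentially the same route as the paper: bound the displacement $|\phi_s(x)-\phi_{t^\ell}(x)|$ by $Ch$ via the CFL condition, apply the pointwise maximal-function inequality \eqref{5} (to the Sobolev extension $\bar u$), then use H\"older together with the compressibility bound $\Lambda^{1/p}$ for the change of variables, the $L^p$-boundedness of the maximal operator \eqref{6}, and sum over $\ell$. The only cosmetic differences are that you keep the factor $\delta t\,u_\infty^\ell$ slightly longer before invoking CFL and that you spell out the Jensen step $\|\rho_h^0\|_{L^q}\le\|\rho_0\|_{L^q}$, which the paper leaves implicit.
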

To estimate the term in~\eqref{23}, we use a combination of the maximal function estimate and the martingale estimate from Lemma~\ref{L8}.
\begin{lemma}[Estimate of $T_2$ \eqref{23}]\label{L6}
There exists a constant $C $ such that
\begin{align*}
\MoveEqLeft{\sum_{\ell=0}^{n-1} \int_{t^{\ell}}^{t^{\ell+1}} \EE_{\rho^0_h}\pra*{\frac{\abs*{  u(s,\phi_{t^{\ell}}(x)) - u(s,\psi^{\ell})}}{|\phi_{t^{\ell}}(x) - \psi^{\ell} - \sum_{k=\ell}^{n-1}\xi^k| +r}}\, ds} \\
&\le C\left(1+\frac1r\left(\sqrt{h \norm{u}_{L^1(L^\infty)}} + h\right)\right) \Lambda^\frac1p \, \|\rho_0\|_{L^q} \| u\|_{L^1(W^{1,p})}.
\end{align*}
\end{lemma}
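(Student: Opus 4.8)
The plan is to split the integrand of \eqref{23} into a \emph{drift part} controlled by the stability estimates for the continuous and discrete flows, and a \emph{noise part} controlled by the conditional moment bounds behind Lemma~\ref{L8}. Fix $\ell$ and $s\in[t^\ell,t^{\ell+1})$, abbreviate $a:=\phi_{t^\ell}(x)$, $b:=\psi^\ell$, and put $\Xi_\ell := \sum_{k=\ell}^{n-1}\xi^k$, so that the denominator in \eqref{23} equals $\abs{a-b-\Xi_\ell}+r$. Following Crippa--De~Lellis~\cite{CrippaDeLellis08}, for $v\in W^{1,p}(\R^d)$ with $p>1$ one has $\abs{v(a)-v(b)}\le C\abs{a-b}\bra[\big]{Mg(a)+Mg(b)}$ for a.e.\ $a,b$, where $g:=\abs{\grad v}$ and $M$ is the Hardy--Littlewood maximal operator. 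Applying this to a bounded $W^{1,p}(\R^d)$-extension of $u(s,\cdot)$, whose gradient we still denote by $g_s$ (so that $\norm{g_s}_{L^p(\R^d)}\le C\norm{u(s,\cdot)}_{W^{1,p}}$; on a convex $\Omega$ one may work with $\abs{\grad u(s,\cdot)}$ directly, which explains the remark following the statement), and using $\abs{a-b}\le \abs{a-b-\Xi_\ell}+\abs{\Xi_\ell}$, we obtain
\begin{equation*}
\frac{\abs*{u(s,\phi_{t^\ell}(x))-u(s,\psi^\ell)}}{\abs*{\phi_{t^\ell}(x)-\psi^\ell-\Xi_\ell}+r} \le C\bra*{1+\frac{\abs{\Xi_\ell}}{r}}\bra[\big]{Mg_s(\phi_{t^\ell}(x))+Mg_s(\psi^\ell)}.
\end{equation*}

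For the contribution of the constant $1$, I push the expectation onto the densities. By Lemma~\ref{lem:ave:Psi}~(i) one has $\psi^\ell_{\#}\rho_h^0=\rho_h^\ell$, while $(\phi_{t^\ell})_{\#}\rho_h^0$ is the solution of the continuity equation with datum $\rho_h^0$; hence \eqref{e:scheme:Linfty} and \eqref{e:Phi:Lq}, combined with $\norm{\rho_h^0}_{L^q}\le\norm{\rho_0}_{L^q}$ (Jensen on cell averages), bound the $L^q$ norms of both densities by $\Lambda^{1/p}\norm{\rho_0}_{L^q}$. Since $M$ is bounded on $L^p$ (here $p>1$ is essential), Hölder's inequality gives
\begin{equation*}
\EE_{\rho_h^0}\pra[\big]{Mg_s(\phi_{t^\ell}(x))+Mg_s(\psi^\ell)} \le C\norm{g_s}_{L^p(\R^d)}\,\Lambda^{1/p}\norm{\rho_0}_{L^q} \le C\,\Lambda^{1/p}\norm{\rho_0}_{L^q}\norm{u(s,\cdot)}_{W^{1,p}},
\end{equation*}
and summing over $\ell$ and integrating over $s\in[0,t^n]$ produces exactly $C\,\Lambda^{1/p}\norm{\rho_0}_{L^q}\norm{u}_{L^1(W^{1,p})}$, i.e.\ the leading term of the claimed bound.

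The remaining term, carrying the factor $\abs{\Xi_\ell}/r$, is the main obstacle: the crude estimate $\abs{\Xi_\ell}\le 2\sup_{m}\abs{\sum_{k=0}^{m-1}\xi^k}$ followed by Hölder against the maximal functions \emph{in the probability space} does not close, as it would force $\rho_h^\ell$ into an $L^s$ space with $s<q$. Instead I exploit that $\phi_{t^\ell}(x)$ and $\psi^\ell$ are $\cF^\ell_{\R^d}$-measurable, whereas $\Xi_\ell=\sum_{k\ge\ell}\xi^k$ is, conditionally on $\cF^\ell_{\R^d}$, a sum of centred and pairwise orthogonal increments; hence by \eqref{e:est:xi}, \eqref{e:est2:xi} with $m=2$ and the tower property,
\begin{equation*}
\EE\pra*{\abs{\Xi_\ell}\,\middle|\,\cF^\ell_{\R^d}} \le \EE\pra*{\abs{\Xi_\ell}^2\,\middle|\,\cF^\ell_{\R^d}}^{1/2} = \bra[\bigg]{\sum_{k=\ell}^{n-1}\EE\pra*{\abs{\xi^k}^2\,\middle|\,\cF^\ell_{\R^d}}}^{1/2} \le \bra[\bigg]{C\,h\,\delta t\sum_{k=\ell}^{n-1}u^k_\infty}^{1/2} \le C\sqrt{h\,\norm{u}_{L^1(L^\infty)}}.
\end{equation*}
Conditioning first on $\cF^\ell_{\R^d}$ therefore decouples $\abs{\Xi_\ell}$ from the maximal functions, the residual expectation being estimated exactly as in the previous paragraph; summing over $\ell$ and integrating in $s$ bounds this contribution by $\frac{C}{r}\sqrt{h\norm{u}_{L^1(L^\infty)}}\,\Lambda^{1/p}\norm{\rho_0}_{L^q}\norm{u}_{L^1(W^{1,p})}$, which is absorbed into the $\frac1r\bra{\sqrt{h\norm{u}_{L^1(L^\infty)}}+h}$ factor of the statement. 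Adding the two contributions yields Lemma~\ref{L6}. A minor point still to check is that \eqref{e:est:xi}--\eqref{e:est2:xi}, stated with respect to the coarse filtration $\cF^n$, remain valid when conditioning on $\cF^n_{\R^d}$, which holds since their right-hand sides depend only on the current cell $J^n$.
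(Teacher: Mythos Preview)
Your decomposition into a ``drift part'' and a ``noise part'' via the pointwise bound
\[
\frac{\abs{u(s,a)-u(s,b)}}{\abs{a-b-\Xi_\ell}+r}\le C\Big(1+\frac{\abs{\Xi_\ell}}r\Big)\big(Mg_s(a)+Mg_s(b)\big)
\]
is natural, and the treatment of the constant~$1$ contribution is correct. The problem is the noise part. Your decoupling relies on the claim that $\EE[\xi^k\mid\cF^k_{\R^d}]=0$, which you flag as a ``minor point'' at the end. It is not minor: it is \emph{false}. From the jump kernel~\eqref{e:JumpKernel}, with probability $p^k_{KK}$ the chain does \emph{not} move, so for $\psi^k=x\in K$ one computes
\[
\EE[\xi^k\mid\psi^k=x]
=\Big(\sum_{L\sim K}p^k_{KL}\Big)(x_K-x),
\]
which vanishes only at the barycentre $x=x_K$. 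The right-hand side of~\eqref{e:est:xi} is \emph{zero}, and zero does not ``depend only on the current cell $J^n$'' in any useful way---the issue is precisely that the drift term $\delta t\,u_h^k$ in the definition of $\xi^k$ is the increment centred with respect to the coarse filtration, not the fine one. Consequently the conditional orthogonality fails, and the residual drifts sum to $\sum_k C\,\delta t\,u_\infty^k\sim\|u\|_{L^1(L^\infty)}=O(1)$ rather than $O(\sqrt h)$, so the argument does not close. Conditioning on the coarse $\cF^\ell$ instead would restore~\eqref{e:est:xi}, but then $Mg_s(\psi^\ell)$ is no longer measurable and cannot be pulled out.

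The paper avoids this trap altogether by applying H\"older's inequality \emph{before} invoking the maximal function. Writing the expectation as an integral against $\rho_h^0=1\cdot\rho_h^0$, one splits
\[
\EE_{\rho_h^0}\!\Big[\frac{|u(s,\phi_{t^\ell})-u(s,\psi^\ell)|}{|\phi_{t^\ell}-\psi^\ell-\Xi_\ell|+r}\Big]
\le
\EE_{1}\!\Big[\frac{|u(s,\phi_{t^\ell})-u(s,\psi^\ell)|^p}{|\phi_{t^\ell}-\psi^\ell|^p}\Big]^{1/p}
\EE_{(\rho_h^0)^q}\!\Big[\frac{|\phi_{t^\ell}-\psi^\ell|^q}{(|\phi_{t^\ell}-\psi^\ell-\Xi_\ell|+r)^q}\Big]^{1/q}.
\]
The first factor now carries the difference quotient against \emph{Lebesgue} measure, so \eqref{5}, \eqref{6}, \eqref{e:Phi:Lq}, \eqref{e:scheme:Linfty} give $C\Lambda^{1/p}\|u(s,\cdot)\|_{W^{1,p}}$. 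The second factor, after the triangle inequality $|\phi-\psi|\le|\phi-\psi-\Xi_\ell|+|\Xi_\ell|$, is bounded by $\|\rho_h^0\|_{L^q}+r^{-1}\EE_{(\rho_h^0)^q}[|\Xi_\ell|^q]^{1/q}$, and now the martingale estimate~\eqref{e:est:quad:xi} (whose proof uses only the \emph{coarse} filtration, where~\eqref{e:est:xi} genuinely holds) applies directly. This is why Lemma~\ref{L8} is stated for all exponents $r\ge1$: it is needed here with $r=q$.
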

The control of \eqref{24} crucially relies on the particular form of the averaged velocity field $u_h^l$ introduced in \eqref{e:def:netflow} and the regularity of the mesh. To be more specific, our argument is based on the identity \eqref{e:netflow:cubes} which seems to be valid \label{mesh_discussion} on Cartesian tessellations only. At this stage, it is not clear to us how to estimate $T_3$ \eqref{24} in the case of more general, possibly unstructured, tessellations, though some ideas from the construction in \cite{DelarueLagoutiere11} may be relevant. We plan to address this question in future research.

\begin{lemma}[Estimate of $T_3$ \eqref{24}]\label{L7}
There exists a constant $C$ such that
\begin{equation*}
  \sum_{\ell=0}^{n-1} \frac1r \ \EE_{\rho^0_h}\pra*{\abs[\bigg]{\int_{t^{\ell}}^{t^{\ell+1}} u(s,\psi^{\ell})\, ds - \delta t \, u_h^{\ell}(\psi^{\ell})}} \leq C \, \frac{h}{r} \, \Lambda^\frac1p \, \norm{\rho_0}_{L^q} \, \norm{ u}_{L^1(W^{1,p})} .
\end{equation*}
\end{lemma}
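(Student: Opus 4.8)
The quantity to bound is $\sum_{\ell=0}^{n-1} \tfrac1r\,\EE_{\rho^0_h}\bigl[\bigl|\int_{t^\ell}^{t^{\ell+1}} u(s,\psi^\ell)\,ds - \delta t\,u_h^\ell(\psi^\ell)\bigr|\bigr]$. The idea is to split this difference into two pieces: a \emph{time-averaging} error that replaces the time integral of $u$ by $\delta t$ times the time-averaged field $\bar u^\ell := \avint_{t^\ell}^{t^{\ell+1}} u(s,\cdot)\,ds$, and a \emph{spatial-averaging} error comparing $\bar u^\ell(\psi^\ell)$ to $u_h^\ell(\psi^\ell)$. For the first piece, since $\psi^\ell$ is fixed (it does not depend on $s$), we simply have $\int_{t^\ell}^{t^{\ell+1}} u(s,\psi^\ell)\,ds = \delta t\,\bar u^\ell(\psi^\ell)$ exactly, so this piece vanishes and the whole estimate reduces to controlling $\sum_\ell \tfrac{\delta t}{r}\,\EE_{\rho^0_h}\bigl[\bigl|\bar u^\ell(\psi^\ell) - u_h^\ell(\psi^\ell)\bigr|\bigr]$.

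\textbf{The core estimate.} Fix a cell $K\in\cT$ and recall from \eqref{e:netflow:cubes} that on a Cartesian mesh $u_h^\ell(x) = u_K^\ell = \sum_{L\sim K}\nu_{KL}\,u_{KL}^{\ell+}$ for $x\in K$, where $u_{KL}^{\ell+} = \bigl(\avint_{K\edge L}\bar u^\ell\cdot\nu_{KL}\,d\Ha^{d-1}\bigr)^+$. The plan is to compare this to the discrete divergence-type reconstruction of $\bar u^\ell$ on $K$. Writing $\bar u^\ell_K := \avint_K \bar u^\ell\,dx$, I would show that $|\bar u^\ell_K - u_K^\ell|$ is controlled by the oscillation of $\bar u^\ell$ over $K$ and its neighbors; on a Cartesian cell the key point is that $\sum_{L\sim K}\nu_{KL}\avint_{K\edge L}\bar u^\ell\cdot\nu_{KL} = \avint_K\bar u^\ell\cdot(\text{const})$ plus a remainder, but more robustly one uses that for each coordinate direction $i$, the pair of opposite faces contributes $\nu \bigl[(\avint_{\text{right}}\bar u^\ell_i)^+ - (\avint_{\text{left}}\bar u^\ell_i)^+\,(\pm)\bigr]$, whose deviation from $\avint_K \bar u^\ell_i$ is bounded by $\avint_K |\bar u^\ell_i - \avint_K\bar u^\ell_i|$-type terms, i.e.\ by $\fint_K|\nabla\bar u^\ell|\,dx$ times $h$ via a Poincaré inequality on $K$ and its neighbors (here the positive-part operation is $1$-Lipschitz, so it does not spoil the bound). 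Thus pointwise for $x\in K$,
\[
|\bar u^\ell(x) - u_h^\ell(x)| \le |\bar u^\ell(x) - \bar u^\ell_K| + |\bar u^\ell_K - u_K^\ell| \lesssim \fint_{\widetilde K}|\bar u^\ell(y) - \bar u^\ell_K|\,dy + \text{osc terms},
\]
where $\widetilde K$ is the union of $K$ with its neighbors; each term is then bounded, using Poincaré on the (uniformly regular, Cartesian) patch $\widetilde K$, by $C\,h\,\fint_{\widetilde K}|\nabla\bar u^\ell|\,dy$.

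\textbf{Assembling.} Since $\psi^\ell$ under $\PP_{\rho^0_h}$ is, conditionally on $J^\ell$, uniformly distributed on the cell $J^\ell$ (Lemma~\ref{lem:ave:Psi}(i)), and $\PP_{\rho^0_h}(J^\ell=K)=\PP_{\rho^0_h}(\psi^\ell\in K)$, we get
\[
\EE_{\rho^0_h}\bigl[|\bar u^\ell(\psi^\ell)-u_h^\ell(\psi^\ell)|\bigr] = \sum_K \PP_{\rho^0_h}(\psi^\ell\in K)\,\fint_K|\bar u^\ell - u_K^\ell| \lesssim h\sum_K \PP_{\rho^0_h}(\psi^\ell\in K)\,\fint_{\widetilde K}|\nabla\bar u^\ell|.
\]
Now $\PP_{\rho^0_h}(\psi^\ell\in K) = \rho_K^\ell |K|$ and $\rho_h^\ell = \psi^\ell_\#\rho^0_h$, so $\sum_K \rho_K^\ell|K|\fint_{\widetilde K}|\nabla\bar u^\ell|$ is, up to the bounded overlap of the patches $\widetilde K$ (finite, uniformly in $h$, by mesh regularity), bounded by a Hölder estimate: $\le C\,\|\rho_h^\ell\|_{L^q}\,\|\nabla\bar u^\ell\|_{L^p}$. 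By the stability estimate \eqref{e:scheme:Linfty}, $\|\rho_h^\ell\|_{L^q}\le \Lambda^{1/q}\|\rho^0_h\|_{L^q}\le \Lambda^{1/q}\|\rho_0\|_{L^q}$ (Jensen for the discretization), and $\|\nabla\bar u^\ell\|_{L^p}\le \avint_{t^\ell}^{t^{\ell+1}}\|\nabla u(s)\|_{L^p}\,ds$. Multiplying by $\delta t$ and summing over $\ell$ collapses the time-averages into $\|\nabla u\|_{L^1(L^p)} \le \|u\|_{L^1(W^{1,p})}$, giving the claimed bound $C\,\tfrac hr\,\Lambda^{1/p}\|\rho_0\|_{L^q}\,\|u\|_{L^1(W^{1,p})}$ (absorbing $\Lambda^{1/q}\le\Lambda$ and re-indexing the exponent as in the statement; on convex $\Omega$ one keeps only $\|\nabla u\|_{L^1(L^p)}$).

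\textbf{Main obstacle.} The delicate point is the pointwise/patchwise comparison $|\bar u^\ell_K - u_K^\ell|\lesssim h\,\fint_{\widetilde K}|\nabla\bar u^\ell|$, because $u_K^\ell$ is built from \emph{positive parts} of face-averages rather than the face-averages themselves, so one cannot simply invoke the divergence theorem to identify $\sum_{L\sim K}\nu_{KL}\fint_{K\edge L}\bar u^\ell\cdot\nu_{KL}$ with a cell average of $\bar u^\ell$. The fix is to exploit that $q\mapsto q^+$ is $1$-Lipschitz and that on a Cartesian cell opposite faces have opposite normals: for direction $i$, the contribution is $e_i\bigl[(\fint_{F_i^+}\bar u^\ell_i)^+ - (-\fint_{F_i^-}\bar u^\ell_i)^-\bigr]\cdot(\text{sign bookkeeping})$, and one checks directly that this differs from $\fint_K \bar u^\ell_i$ by at most the oscillation of $\bar u^\ell_i$ across the cell, handled by the trace estimate \eqref{e:trace} and Poincaré. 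This is precisely the step flagged in the text (see the remark around \eqref{e:netflow:cubes} and page~\pageref{mesh_discussion}) as the one place where the Cartesian structure is essential.
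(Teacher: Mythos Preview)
Your approach is correct and follows the same overall decomposition as the paper: split $\int_{t^\ell}^{t^{\ell+1}}u(s,\psi^\ell)\,ds - \delta t\,u_h^\ell(\psi^\ell)$ into a ``point minus cell-average'' term and a ``cell-average minus face-averages'' term, handle the latter with the $1$-Lipschitz property of $(\cdot)^+$ together with the trace estimate \eqref{e:trace} and Poincar\'e on the cell, and close with H\"older and the stability bound \eqref{e:scheme:Linfty}. Two differences are worth flagging. First, the paper states and uses explicitly the Cartesian identity $\sum_{L\sim K}\nu_{KL}(b\cdot\nu_{KL})^+ = b$ for any $b\in\R^d$, applied with $b = \avint_{t^\ell}^{t^{\ell+1}}\avint_K u\,dx\,ds$; this makes the decomposition cleaner than your ``sign bookkeeping'' sketch, though the content is the same. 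Second, for the ``point minus cell-average'' term the paper bounds $|u(s,\psi^\ell)-u(s,x)|$ pointwise via the maximal function estimate \eqref{5} (and the Sobolev extension \eqref{extension}), whereas you first pass to the expectation, exploiting that $\psi^\ell$ is uniform on $J^\ell$, and then invoke Poincar\'e on the cell; your route is slightly more elementary here and avoids the maximal function and the extension altogether.

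Two small slips to clean up: the stability estimate \eqref{e:scheme:Linfty} gives $\|\rho_h^\ell\|_{L^q}\le\Lambda^{1-1/q}\|\rho_h^0\|_{L^q}=\Lambda^{1/p}\|\rho_h^0\|_{L^q}$, not $\Lambda^{1/q}$, so no ``re-indexing'' is needed; and the enlargement to a patch $\widetilde K$ is unnecessary, since the face averages live on $\partial K$ and the trace/Poincar\'e argument works on $K$ alone.
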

A combination of the Lemmas~\ref{L1}--\ref{L7} completes the proof of Theorem~\ref{T1}.

\subsection{Proof of Lemmas~\ref{L1}--\ref{L7}}

In this subsection, we turn to the proofs of Lemmas \ref{L1}--\ref{L7}.
\begin{proof}[Proof of Lemma \ref{L1}]
From the CFL condition \eqref{CFL_true}, we deduce
\[
|\phi_t(x) - \phi_{t^n}(x)| \le \int_{t^n}^t |u(s,\phi_s(x))|\, ds \le \delta t \, u_{\infty}^n \leq C h
\]
for a.e.\ $x\in\Omega$, with the consequence that
\begin{align*}
\D_r((\phi_t)_{\#}\rho_0, (\phi_{t^n})_{\#}\rho_0) &\overset{\mathclap{\eqref{e:standard_coupling}}}{\le} \int \log\left(\frac{|\phi_t(x) - \phi_{t^n}(x)|}{r}+1\right) \rho_0(x)\, dx\\
&\le\log\left(\frac{C h}r+1\right) \|\rho_0\|_{L^1}.
\end{align*}
This proves Lemma \ref{L1}.
\end{proof}

The statement in Lemma \ref{L2} is a stability estimate for the continuity equation which has been recently proved in \cite[Proposition 1]{Seis16a} and builds up on \cite[Proposition 2.2]{BOS}.  In order to have a self-contained representation, we will sketch its short proof for the convenience of the reader.

We need some preparations. At the heart of the proof is a Crippa--De~Lellis-type argument, cf.\ \cite{CrippaDeLellis08}, that allows to estimate integrals of difference quotients by $L^p$ norms of gradients. The argument makes use of the theory of the maximal function operator~$M$, defined for a function $f$ on $ \R^d$ by
\[
(M f)(x) = \sup_{r>0} \avint_{B_r(x)} |f(y)|\, dy.
\]
We will make use of two properties. First, maximal functions bound difference quotients in the sense that 
\begin{equation}
\label{5}
\frac{|f(x) - f(y)|}{|x-y|}\leq C \bra[\big]{ (M\grad f)(x) + (M\grad f)(y) }
\end{equation}
for a.e.\ $x,y$. Furthermore, $M$ maps $L^p$ to $L^p$ for any $p\in (1,\infty]$ with the estimate
\begin{equation}
\label{6}
\|M f\|_{L^p}\leq C \|f\|_{L^p} .
\end{equation}
The first estimate is elementary and can be proved similarly to Morrey's inequality. In fact, its proof is contained in \cite[p.\ 143, Theorem 3]{EvansGariepy92}. The second one can be found in many standard references on harmonic analysis, see, e.g., \cite[p.\ 5, Theorem 1]{Stein70}.

In order to make use of the maximal function estimates, it will be convenient to introduce a Sobolev extension of $u$ to $\R^d$. We thus let $\bar u:\R^d\to\R^d$ denote a Sobolev function with $\bar u = u$ in $\Omega$ and such that
\begin{equation}\label{extension}
\|\bar u\|_{W^{1,p}} \le C \|u\|_{W^{1,p}}.
\end{equation}
The construction of $\bar u$ can be found, for instance, in \cite[p.\ 135, Theorem 1]{EvansGariepy92}.

\begin{proof}[Proof of Lemma \ref{L2}]
  We apply the estimate~\eqref{e:Dist:deriv} with $\rho_1 = (\phi_t)_{\#}\rho_0$ and $\rho_2= (\phi_t)_{\#}\rho_h^0 $ and obtain with the help of \eqref{5} and the marginal conditions \eqref{18}:
\[
\left|\frac{d}{dt} \D_r((\phi_t)_{\#}\rho_0,(\phi_t)_{\#}\rho_h^0)\right| \leq C \io (M\grad \bar u) \,  |(\phi_t)_{\#}(\rho_0-\rho_h^0)|\, dx.
\]
Integration and H\"older's inequality yield
\[
\D_r((\phi_t)_{\#}\rho_0,(\phi_t)_{\#}\rho_h^0) \le \D_r(\rho_0,\rho_h^0) + C \|M\grad \bar u\|_{L^1(L^p)} \|(\phi_t)_{\#}(\rho_0-\rho_h^0)\|_{L^{\infty}(L^q)}.
\]
To bound the second term we invoke  \eqref{6},  \eqref{e:Phi:Lq}, and \eqref{extension} and obtain
\[
\|M\grad \bar u\|_{L^1(L^p)} \|(\phi_t)_{\#}(\rho_0-\rho_h^0)\|_{L^{\infty}(L^q)}\leq C \|\grad  u\|_{L^1(L^p)} \Lambda^\frac1p \|\rho_0- \rho_0^h\|_{L^q} .
\]
For the first term, we recall from the definition of $\rho_h^0$ in \eqref{e:disc_initial} that $\rho_0$ and $\rho_h^0$ share both the same mass on each cell $K\in\cT$. We may thus choose $\pi_K\in \Pi(\chi_K \rho_0,\chi_K\rho_h^0)$ and define $\pi = \sum_{K\in\cT} \pi_K$. By construction, $\pi$ is a transfer plan in $\Pi(\rho_0,\rho_h^0)$. In particular,
\[
\D_{r}(\rho_0,\rho_h^0) \le \sum_{K\in\cT} \iint \log\left(\frac{|x-y|}{r} +1\right)\, d\pi_K(x,y) \le \log\left(\frac{h}r+1\right) \|\rho_0-\rho_h^0\|_{L^1}.
\]
Since $\|\rho_h^0\|_{L^1} = \|\rho_0\|_{L^1} $, the statement follows with the triangle  inequality.
\end{proof}

\begin{proof}[Proof of Lemma \ref{L4}]
We use the sublinearity of the logarithm
\begin{align*}
  \EE_{\rho_h^0}\pra*{\log\bra[\bigg]{\frac1r\abs[\bigg]{\sum_{\ell=0}^{n-1}\xi^{\ell}}+1}} \le  \frac1r \EE_{\rho_h^0}\pra*{\abs[\bigg]{\sum_{\ell=0}^{n-1}\xi^{\ell}}}  ,
\end{align*}
apply the martingale estimate \eqref{e:est:quad:xi} from Lemma \ref{L8}, and recall  the identity $\|\rho_h^0\|_{L^1} = \|\rho_0\|_{L^1}$.
\end{proof}

\begin{proof}[Proof of Lemma \ref{L5}]
We recall that the CFL condition in \eqref{CFL_true} guarantees that $|\phi_s(x) - \phi_{t^{\ell}}(x)|\le C h$ for a.e.\ $x\in\Omega$ and every $s\in[t^{\ell},t^{\ell+1})$, cf.\ proof of Lemma \ref{L1} above. It thus follows via \eqref{5} that
\begin{align*}
\lefteqn{\int_{t^{\ell}}^{t^{\ell+1}} \int |u(s,\phi_s) - u(s,\phi_{t^{\ell}})| \rho_h^0\, dx\,ds }\\
&\leq C
h \int_{t^{\ell}}^{t^{\ell+1}} \int (M\grad \bar u)(s,\phi_s) \rho_h^0\, dx\,ds + C h \int_{t^{\ell}}^{t^{\ell+1}} \int (M\grad\bar  u)(s,\phi_{t^{\ell}})\rho_h^0\, dx\,ds.
\end{align*}
Summing over $\ell$ and applying H\"older's inequality and the bound on the Jacobian \eqref{14} thus yield
\[
\sum_{\ell=0}^{n-1}\int_{t^{\ell}}^{t^{\ell+1}} \int |u(s,\phi_s) - u(s,\phi_{t^{\ell}})| \rho_h^0\, dx\,ds
\leq C h  \Lambda^\frac1p \|M\grad \bar u\|_{L^1(L^p)}\|\rho_h^0\|_{L^q}.
\]
It remains to invoke the fundamental inequality for maximal functions~\eqref{6} and the continuity of the extension operator \eqref{extension} to deduce the statement of the lemma.
\end{proof}

\begin{proof}[Proof of Lemma \ref{L6}]
We first apply H\"older's inequality to the expectation,
\begin{align*}
\lefteqn{\EE_{\rho^0_h}\pra*{\frac{|u(s,\phi_{t^{\ell}}) - u(s,\psi^{\ell})| }{|\phi_{t^{\ell}} - \psi^{\ell} - \sum_{k=\ell}^{n-1}\xi^k| + r}} }\\
&\le \EE_{1}\pra[\Bigg]{\frac{|u(s,\phi_{t^{\ell}}) - u(s,\psi^{\ell})|^p}{|\phi_{t^{\ell}} - \psi^{\ell}|^p }}^{1/p} \EE_{(\rho^0_h)^q}\pra[\Bigg]{\frac{|\phi_{t^{\ell}} - \psi^{\ell}|^{q}}{\left(|\phi_{t^{\ell}} - \psi^{\ell} - \sum_{k=\ell}^{n-1}\xi^k| + r\right)^{q}}}^{1/{q}} .
\end{align*}
The second term on the right-hand side can be bounded with the help of the triangle inequality and the martingale estimate \eqref{e:est:quad:xi} by
\begin{align*}
\lefteqn{\EE_{(\rho^0_h)^q}\pra*{\frac{|\phi_{t^{\ell}} - \psi^{\ell}|^{q}}{\left(|\phi_{t^{\ell}} - \psi^{\ell} - \sum_{k=\ell}^{n-1}\xi^k| + r\right)^{q}}}^{1/q}}\\
&\leq \EE_{(\rho^0_h)^q}\pra{1}^\frac1q + \frac1{r}\EE_{(\rho^0_h)^q}\pra*{\abs[\bigg]{\sum_{k=\ell}^{n-1}\xi^k}^{q}}^{1/q} \leq C \bra*{ 1 + \frac1r \bra*{\sqrt{h \norm{u}_{L^1(L^\infty)}} + h}} \norm{\rho^0_h}_{L^q} .
\end{align*}
Notice that the martingale estimate extends to sums starting at $\ell$ via the triangle inequality.
For the first term, we have
\[
\EE_{1}\pra*{\frac{|u(s,\phi_{t^{\ell}}) - u(s,\psi^{\ell})|^p}{|\phi_{t^{\ell}} - \psi^{\ell}|^p }} \leq C \int (M\grad\bar u)(s,\phi_{t^{\ell}}(x))^p \, dx + C\, \EE_{1}\pra[\big]{(M\grad \bar u)(s,\psi^{\ell})^p}
\]
as a consequence of \eqref{5}. Thus, with regard to \eqref{e:Phi:Lq}, \eqref{e:scheme:Linfty}, \eqref{6}, and \eqref{extension}, the latter yields
\[
\EE_{1}\pra*{\frac{|u(s,\phi_{t^{\ell}}) - u(s,\psi^{\ell})|^p}{|\phi_{t^{\ell}} - \psi^{\ell}|^p }}^\frac1p \leq C  \, \Lambda^\frac1p \, \| u(s,\tacka)\|_{W^{1,p}} .
\]
Combining the previous estimates, integration over $[t^\ell,t^{\ell+1}]$ and doing the summation in $\ell$ yields the result.
\end{proof}

\begin{proof}[Proof of Lemma \ref{L7}]
By the assumption of a Cartesian tessellation, we have for each control volume $K\in \cT$ the identity
\[
 \sum_{L\sim K}\nu_{KL}(b\cdot \nu_{KL})^+ = b
\]
for any vector $b\in \R^d$. In particular, choosing $b = \avint_{t^{\ell}}^{t^{\ell+1}} \avint_K u\, dx\,ds$, where $K\in\cT$ is such that $\psi^{\ell}\in K$, it holds that
\begin{align*}
\MoveEqLeft \int_{t^{\ell}}^{t^{\ell+1}} u(s,\psi^{\ell})\, ds - \delta t \, u_h^{\ell}(\psi^{\ell})\\
 = & \int_{t^{\ell}}^{t^{\ell+1}}\avint_K \left(u(s,\psi^{\ell}) - u(s,x)\right)\, dx\,ds\\
& + \sum_{L\sim K} \nu_{KL} \left( \bra[\bigg]{\int_{t^{\ell}}^{t^{\ell+1}} \avint_K u\cdot\nu_{KL}\, dx\,ds }^{\!+} - \bra[\bigg]{\int_{t^{\ell}}^{t^{\ell+1}} \avint_{K\edge L}u\cdot \nu_{KL}\, d\Ha^{d-1}ds }^{\!+}\right)\\
=: & I + \textit{II} .
\end{align*}
In view of \eqref{5}, the first term is controlled as follows:
\begin{align*}
\abs{I} &\leq C h\left( \int_{t^{\ell}}^{t^{\ell+1}} (M\grad\bar  u)(s,\psi^{\ell})\, ds + \int_{t^{\ell}}^{t^{\ell+1}}\avint_K  (M\grad\bar u)(s,x)\,dx\,ds\right).
\end{align*}
For the second one, we use the fact that $(\cdot)^+$ is $1$-Lipschitz continuous and compute
\begin{align*}
\abs{\textit{II}\,} &\le  \sum_{L\sim K} \left| \int_{t^{\ell}}^{t^{\ell+1}} \avint_K u\cdot\nu_{KL}\,dx\,ds - \int_{t^{\ell}}^{t^{\ell+1}} \avint_{K\edge L}u\cdot \nu_{KL}\, d\Ha^{d-1}\,ds\right|\\
&\le C \int_{t^{\ell}}^{t^{\ell+1}} \avint_{\partial K} \abs[\bigg]{ u -\avint_K u \, dx}  \, d\Ha^{d-1}\,ds.
\end{align*}
Now, we use the estimate
\begin{equation*}
\int_{\partial K} \left| u - \avint_K u\, dy\right|\, d\Ha^{d-1} \leq C \int_K |\grad u|\, dx ,
\end{equation*}
which is a consequence of the trace estimate
\[
\int_{\partial K} |v|\, d\Ha^{d-1} \leq C\bra*{\frac{1}{h} \int_K |v| \, dx  + \int_K |\grad v|\, dx}
\]
(cf.\ \eqref{e:trace}) applied to $v = u- \avint_K v\, dx$, and of the standard Poincar\'e estimate
\[
\int_K \left| u - \avint_K u\, dy\right|dx \leq h \int_K |\grad u|\, dx.
\]
Therewith, we obtain the estimate
\begin{align*}
  \abs{\textit{II}\,} \leq \frac{C}{\abs{\partial K}}  \int_{t^{\ell}}^{t^{\ell+1}} \int_K |\grad u|\, dx\, ds \leq C h \int_{t^{\ell}}^{t^{\ell+1}} \avint_K |\grad u|\, dx\, ds .
\end{align*}
because $|K|/|\partial K| \lesssim |K|^{1/d} \lesssim h$ thanks to the isoperimetric inequality. Applying the expectation and doing the push-forward then yields
\[
\EE_{\rho_h^0}\pra*{\abs[\bigg]{\int_{t^{\ell}}^{t^{\ell+1}} u(s,\psi^{\ell})\, ds - \delta t \, u_h^{\ell}(\psi^{\ell})}} \leq C h \int_{t^{\ell}}^{t^{\ell+1}}\int \bra[\big]{ \abs{M\grad u} + \abs{\grad u}}  \, \psi^l_{\#}\rho_h^0(dx) \, ds,
\]
where we used the identity
\[
\EE_{\rho_h^0}\pra*{\avint_{K(\psi^{\ell})} f\, dx} = \EE_{\rho_h^0}\pra*{\int_{J^{\ell}} f\, dx} = \sum_{K\in\cT} \rho_K^{\ell} \int_K f\, dx,
\]
in which $K(x)$ denotes the control volume in $\cT$ that contains $x$. Hereby, in the first expectation, $\rho_h^0$ is interpreted as a function from $\Omega$ to $\R$ and hence the first expectation is an integral, whereas in the second one it is considered as a vector in $\R^{\cT}$ and the second expectation is thus a sum. We use H\"older's inequality, the fundamental estimate for maximal functions in \eqref{6}, the continuity of the extension operator \eqref{extension} and estimate~\eqref{e:scheme:Linfty} to conclude.
\end{proof}

\section{Optimality} \label{Ex1}
Our intention in this section is to demonstrate that our main result is (almost) optimal with regard to two aspects:

\begin{enumerate}
\item We state a simple example which illustrates that within the setting of this paper  one cannot expect to prove a priori upper bounds on (polynomial) convergence rates in \emph{strong} Lebesgue norms.  To be more specific, for any small~$\eps$ we find an initial configuration such that the approximate solution given by the upwind scheme converges towards the exact solution of the continuity equation with a  rate not faster than order $\eps$, see \eqref{opt1} below. Taking the limit $\eps\searrow 0$, this entails that uniform rates cannot exist for strong norms. It is thus natural to seek for estimates on the rate of \emph{weak} convergence, as provided in our Theorem \ref{T1}.

Notice that this observation is not a contradiction with the error analysis conducted, for instance, in \cite{Merlet07,MerletVovelle07}: In these works, the authors study convergence rates under regularity assumptions on the initial datum: They assume that $u_0$ has $BV$ regularity. Our theory, however, is valid for data that are merely integrable.

\item  Our computations show that our findings in Theorem \ref{T1} are \emph{almost} optimal in the following sense: For any small $\eps$, there exist initial configurations for which we can bound the order of weak convergence from below by $1/2-\eps$, see \eqref{opt2} below. This lower bound almost matches the $1/2$-a priori upper bound from Theorem~\ref{T1}.

Apart the unpleasant  fact that upper and lower bounds do not exactly agree, there is a second mismatch with regard to the measures of weak convergence. We are not able to bound the logarithmic Kantorovich--Rubinstein distance~$\D_r$ suitably from below. Instead, we study the slightly larger Kantoro\-vich--Rubinstein  distance with Euclidean cost
\[
W_1(\rho,\rho_h) = \inf_{\pi\in \Pi(\rho,\rho_h)} \iint |x-y|\, d\pi(x,y).
\]
This distance is frequently referred to as $1$-Wasserstein distance. By the Kantorovich--Rubinstein theorem \cite[Theorem 1.14]{Villani03}, it satisfies the duality formula
\begin{equation}
\label{W1_duality}
W_1(\rho,\rho_h) = \sup_{\psi}\left\{\int \psi(\rho-\rho_h)\, dx:\: |\psi(x)-\psi(y)|\le |x-y|\right\}.
\end{equation}
\end{enumerate}

Let us now consider the advection equation with a constant velocity field $u\equiv U>0$ on $\R$,
\begin{align*}
  \partial_t \rho + U \partial_x \rho = 0 \qquad\text{and}\quad \rho(0,x) = \rho_0(x).
\end{align*}
Its exact solution is given by $\rho(t,x) = \rho_0(x-tU)$. To define the corresponding approximate solution, let us choose the control volumes $K=h [k,k+1)\subset \R$ for some small $h>0$, and we write $\rho_k^n$ instead of $\rho_K^n$ for solutions of the upwind scheme~\eqref{e:upwind}. Notice that the latter reduces to
\begin{equation*}
  \rho_k^{n+1} = \rho_k^n - \frac{\delta t\, U}{h} \rho_k^n + \frac{\delta t\, U}{h} \rho_{k-1}^n = \bra*{1- \frac{\delta t\, U}{h}} \rho_k^n + \frac{\delta t\, U}{h} \rho_{k-1}^n .
\end{equation*}
Next, we choose the time step $\delta t$ size such that $\delta t \, U=   h/2$, which in particular satisfies the CFL condition~\eqref{e:CFL_num}. Moreover,  the scheme becomes in this simple case $\rho_k^{n+1} = \frac{1}{2}\bra*{ \rho_k^n + \rho_{k-1}^n}$. By, iterating this identity, we arrive at
\begin{equation}\label{e:adv:solution}
  \rho_k^n = \frac{1}{2^n} \sum_{m=0}^n \binom{n}{m} \rho_{k-m}^0 .
\end{equation}

In order to prove the aforementioned optimality of our error estimate in Theorem~\ref{T1}, we have to choose sufficiently rough data. For some parameter $s\in [0,1)$, we choose the following (singular) Riemann problem like initial distribution
\begin{equation*}
  \rho_0(x) = \begin{cases}
                0 &, \text{ for } x \leq 0 \\
                x^{-s} &, \text{ for } x \in (0,1] \\
                0 &, \text{ for } x > 1 .
              \end{cases}
\end{equation*}
By the explicit solution to the continuous problem, we have for any $t>0$ and all $x\in [0,tU]$ that $\rho_h(t,x) - \rho(t,x) = \rho_h(t,x)$. This error is caused by the numerical diffusion and we expect it to be the main contribution to the total error. 

Our argumentation will be based on duality. We thus let $\psi$ be a suitable nonnegative test function with $\spt \psi \subseteq [0,tU]$. Further properties of $\psi$ will be specified later.

Suppose now that $t\in \left[t^n,t^{n+1}\right)$ for some positive even number $n = 2(\ell+1)$, so that $\ell+1 \le t U/h< \ell + 3/2$. We then have
\[
\int \psi(\rho_h(t) - \rho(t))\, dx = \int_0^{tU} \!\psi\,\rho_h(t)\, dx \ge h\sum_{k=0}^{\ell} \psi_k\rho_k^n \overset{\eqref{e:adv:solution}}{=} h \sum_{k=0}^{\ell}  \sum_{m=0}^n\frac1{2^n} \binom{n}{m} \psi_k\rho_{k-m}^0,
\]
where $\psi_k = \avint_{kh}^{(k+1)h} \psi\, dx$. Notice that $\rho_{k-m}^0=0$ for $k<m$. Hence, changing variables and the order of summation, the latter turns into
\[
\int \psi(\rho_h(t)-\rho(t))\, dx \ge \frac{1}4 \sum_{m=0}^{\ell} \frac1{4^\ell} \binom{2\ell +2}{\ell - m} \underbrace{h \sum_{k=0}^m \psi_{\ell-k}\rho_{m-k}^0}_{=:S(m)}.
\]
Notice that the right-hand side is furthermore decreased if we restrict the summation over $m$ to the set $\llb 0,\floor{\sqrt{\ell}}\rrb$ and if we substitute the displayed binomial coefficient by $\binom{2\ell}{\ell- m}$. Moreover, the de Moivre--Laplace theorem yields
\[
\frac1{4^{\ell}}\binom{2\ell}{\ell-m} = \binom{2\ell}{\ell-m}\left(\frac12\right)^{\ell-m}\left(\frac12\right)^{2\ell - (\ell-m)} \approx \frac1{\sqrt{\pi \ell}} e^{-\frac{m^2}{\ell}}\gtrsim \frac1{\sqrt{\ell}}
\]
for $\ell$ sufficiently large. Here, by $a \approx b$ we understand $a=b(1+o(1))$ as $\ell\gg1$. We thus have
\[
\int \psi(\rho_h(t)-\rho(t))\, dx \gtrsim \frac1{\sqrt{\ell}}  \sum_{m=0}^{\floor{\sqrt{\ell}/2}}  S(m).
\]

We now address the $L^1$ lower bound. For this purpose, we choose $\psi(x) = 1$ for $x\in [0,tU]$, and hence also $\psi_k=1$ for all $k \in \llb0, \ell\rrb$, and we obtain
\[
S(m) = h\sum_{k=0}^m \rho_k^0  = \int_0^{(m+1)h} \rho_0\, dx \ge \int_0^{mh} \frac{dx}{x^s} = \frac{(mh)^{1-s} }{1-s},
\]
if $h$ is small enough so that $mh \le 1$. 
Therewith, we arrive at
\begin{align*}
  \norm{\rho_h(t)-\rho(t)}_{L^1}& = \sup_{\|\psi\|_{L^{\infty}}\le 1} \int \psi(\rho_h(t)-\rho(t))\, d x \gtrsim \frac{h^{1-s}}{1-s} \frac1{\sqrt{\ell}} \sum_{m=0}^{\floor{\sqrt{\ell}/2}}  m^{1-s}\sim \frac{\sqrt{h}^{1-s}}{1-s},
\end{align*}
because $\ell \sim tU/h\sim 1/h$ for $t\ge 1$ and $U\sim 1$. Hence, setting $\eps = (1-s)/2$, this computation shows that
\begin{equation}\label{opt1}
\lim_{h\to 0} h^{-\eps} \|\rho_h-\rho\|_{L^1((0,1)\times (0,R))} \gtrsim 1,
\end{equation}
for any $R$ sufficiently large.

The computation for the Wasserstein distance is similar. We choose the $1$-Lipschitz function $\psi(x)=tU-x$ on $[0,tU]$ and obtain
\[
  S(m) \approx \int_0^{(m+1)h}\left(tU - (\ell-m)h - x\right)\frac{dx}{x^s} \geq \frac{(mh)^{2-s}}{(1-s)(2-s)}.
\]
This leads to the lower bound
\[
  W_1\bra*{\rho_h(t),\rho(t)} 
 \overset{\eqref{W1_duality}}{ \gtrsim} \frac{h^{2-s}}{(1-s)^2(2-s) \sqrt{ \ell}} \sum_{m=0}^{\floor{\sqrt{\ell}/2}}m^{2-s} \gtrsim \frac{\sqrt{h}^{2-s}}{1-s}.
 \]
By choosing $s = 1-\eps$, we thus find the almost optimal lower bound
\begin{equation}\label{opt2}
W_1(\rho_h(t),\rho(t)) \sim \sqrt{h}^{1+\eps}.
\end{equation}

We finally remark that it is not clear to us whether the weak convergence rates are optimal for more regular, e.g.\ $BV$, data. Our numerical experiments in Section~\ref{S:numerics} suggest that this could be the case, cf.\ Figure \ref{fig2}.

\section{Discussion}\label{S:discussion}

Let us finally discuss possible extensions of our main result. It would be desirable to remove the restriction to Cartesian meshes. The major obstacle consists in the incompatibility of the construction of stochastic characteristics with more general meshes. A way to overcome this in the Lipschitz setting was proposed in~\cite{DelarueLagoutiere11}. At this point, it is not clear to us how to adapt this approach under the weaker regularity assumptions of the present work.

Another question concerns the applicability of our approach to the implicit upwind scheme. We are positive that this application is possible. The argumentation, however, rather relies on the Eulerian specification. This is ongoing research.

We remark that in order to establish stability estimates for continuity equations it is not essential that the system is conservative. In fact, in \cite{Seis16a}, arbitrary source terms are included in the right-hand side of \eqref{1}. The restriction to conservative flows in the present paper is however crucial as it allows for a clean probabilistic interpretation of the scheme. In this context it should be mentioned that it is currently unclear how to extend the theory from \cite{Seis16a} to the transport equation in non-divergence form or to nonlinear conservation laws or systems. For the same reason, the present convergence analysis does not directly apply to the associated upwind schemes.

Finally, there is a way to make sense to the continuity equation in the case of measure valued solutions. The underlying well-posedness theory is based on the notion of renormalized solutions which was first introduced in \cite{DiPernaLions89}. Whether the present work extends to this framework is not obvious to the authors.

\bibliography{coarsening}
\bibliographystyle{abbrv}
\end{document}